\documentclass[a4paper,11pt]{article}%

\pdfoutput=1 
\usepackage{amsmath}%
\usepackage{amsfonts}%
\usepackage{amssymb}%
\usepackage{amsthm}
\usepackage{stmaryrd}
\usepackage{graphicx}
\usepackage[utf8]{inputenc}
\usepackage[T1]{fontenc}
\usepackage[french, english]{babel}
\usepackage{enumitem}
\usepackage{tikz}
\usetikzlibrary{decorations.markings}
\usepackage[colorlinks=true]{hyperref}

\newtheorem{theorem}{Theorem}[section]

\newtheorem{corollary}[theorem]{Corollary}

\newtheorem{lemma}[theorem]{Lemma}

\newtheorem{proposition}[theorem]{Proposition}

\theoremstyle{definition}
\newtheorem{definition}[theorem]{Definition}

\theoremstyle{remark}

\newtheorem{remark}[theorem]{Remark}

\newlength{\espaceavantspecialthm}
\newlength{\espaceapresspecialthm}
\newcommand{\R}{\mathbb{R}}

{\\}

\newenvironment{defi*}[1][]{
\vskip \espaceavantspecialthm \noindent \textbf{D\'efinition.} }%
{\vskip \espaceapresspecialthm}

\counterwithin{equation}{section}

\tikzset{->-/.style={decoration={
  markings,
  mark=at position .5 with {\arrow{latex}}},postaction={decorate}}}

\newcommand\test[1]{
\pgfmathsetmacro{\var}{#1}
\pgfmathparse{ifthenelse(\var>=0,"positif","négatif")} \pgfmathresult}%

\newcommand{\hgline}[3]{
\pgfmathsetmacro{\thetaone}{#1}
\pgfmathsetmacro{\thetatwo}{#2}
\pgfmathsetmacro{\theta}{(\thetaone+\thetatwo)/2}
\pgfmathsetmacro{\phi}{abs(\thetaone-\thetatwo)/2}
\pgfmathsetmacro{\close}{less(abs(\phi-90),0.0001)}
\ifdim \close pt = 1pt
    \draw[->-, color=#3] (\thetaone:1) -- (\thetatwo:1);
\else
	\pgfmathsetmacro{\R}{tan(\phi)}
	\pgfmathsetmacro{\test}{(\thetaone-\thetatwo)/abs(\thetaone-\thetatwo)}
	\draw[->-, color=#3] (\thetaone:1) arc (\thetaone+\test*90:\thetaone+\test*(270-2*\phi):\R);
\fi
}

\newcommand{\hglinefill}[3]{
\pgfmathsetmacro{\thetaone}{#1}
\pgfmathsetmacro{\thetatwo}{#2}
\pgfmathsetmacro{\theta}{(\thetaone+\thetatwo)/2}
\pgfmathsetmacro{\phi}{abs(\thetaone-\thetatwo)/2}
\pgfmathsetmacro{\close}{less(abs(\phi-90),0.0001)}
\ifdim \close pt = 1pt
    \filldraw[->-, color=#3] (\thetaone:1) -- (\thetatwo:1) ;
\else
	\pgfmathsetmacro{\R}{tan(\phi)}
	\pgfmathsetmacro{\test}{(\thetaone-\thetatwo)/abs(\thetaone-\thetatwo)}
	\filldraw[color=#3, opacity=.2] (\thetaone:1) arc (\thetaone+\test*90:\thetaone+\test*(270-2*\phi):\R) arc (\thetaone+\test*(270-2*\phi)-90:\thetaone+\test*90+90:1);
\fi
}

\begin{document}

\sloppy

\title{Distortion in the group of locally monotone homeomorphisms of a Cantor set and in the group of generalized interval exchange transformations}
\author{Nancy Guelman, Emmanuel Militon}
\maketitle

\begin{abstract}
Let $f$ be either a generalized interval exchange transformation or a locally monotone homeomorphism of a Cantor subset of the real line. In this article, we prove that the following are equivalent.
\begin{enumerate}
    \item The number of discontinuities $d(f^{n})$ of $f^{n}$ is bounded.
    \item There exists $n \geq 1$ such that the element $f$ is conjugate to the restriction to a closed invariant subset of a disjoint union $\mathbb{S}^1_n$ of $n$ circles of a homeomorphism of $\mathbb{S}^1_n$.
    \item The element $f$ is distorted in the group of generalized interval exchange transformations or in the group of locally monotone homeomorphisms of the Cantor subset.
\end{enumerate}
\end{abstract}

\selectlanguage{english}

\section{Introduction}

Let $K$ be a Cantor subset of the circle $\mathbb{S}^1$. A diffeomorphism of $K$ is a homeomorphism of $K$ which locally coincides with a diffeomorphism between two open intervals of the circle. In \cite{FN}, Funar and Neretin introduced the group of such diffeomorphisms and proved that, if the Cantor subset is the standard ternary Cantor subset, the group we obtain contains Thompson's $V_2$ and is very close to being $V_2$. Hence, the authors considered such groups as generalizations of Thompson's groups. Such groups were further studied in the articles by Malicet and the second author. In those articles, some results which can be reminiscent of the Tits alternative for linear groups were proved, namely, in \cite{MM1}, it is proved that subgroups of the group of diffeomorphisms of $K$ without free subsemigroups are metabelian and, in \cite{MM2}, it is proved that any subgroup which does not contain an abelian free subgroup has to preserve a probability measure on the Cantor subset $K$. This last result is valid more generally for the group $\mathrm{Homeo}_{<}(K)$ of homeomorphisms of $K$ which are locally monotone, which is the group under consideration in this article.

A closely related group is the group $\mathrm{GIET}$ of generalized interval exchange transformations, which consists of all piecewise continuous bijections of $[0,1]$, up to a bijection supported on a finite subset. Indeed, take a finitely generated subgroup $G$ of the group of generalized interval exchange transformation. By blowing up the orbits of the discontinuity points of a generating set of $G$, we obtain a locally monotone homeomorphism of a closed subset of the real line. In the case that the group acts minimally on $[0,1]$, the closed subset we obtain is a Cantor subset. Hence it is not surprising that results about the groups $\mathrm{Homeo}_<(K)$ also apply to the group of generalized interval exchange transformations. This article is no exception and the results we prove for the group $\mathrm{Homeo}_<(K)$ translate into similar results for the group $\mathrm{GIET}$.

More specifically, in this article, we describe which elements of $\mathrm{Homeo}_<(K)$ are distorted, which means roughly that the morphism from $\mathbb{Z}$ to $\mathrm{Homeo}_<(K)$ that they induce is not a quasi-isometric embedding in $\mathrm{Homeo}_<(K)$ (see Section \ref{Sec:statements} for the precise definition of distortion element that we use in this article). The notion of distortion elements in groups is a geometric group theory notion introduced by Gromov. In relation to the study of the dynamics of group actions on low-dimensional spaces, distortion has been studied more systematically in transformation groups in recent years (see \cite{Pol}, \cite{FH}, \cite{CF} for instance).  

Indeed, if a group contains no nontrivial (i.e. nontorsion) distorted elements, then, for instance, its torsionfree nilpotent subgroups have to be abelian, which prevents some higher-rank lattices, as finite index subgroups of $\mathrm{SL}_n(\mathbb{Z})$, for $n \geq 3$, from embedding in the group. Closer to the group we are interested in in this article, Novak proved in \cite {Nov} that, in the group of interval exchange transformations, distorted elements are finite order elements. In \cite{GL}, the first author and Liousse proved that in the group of affine interval exchange transformations, if an element is distorted, then its sequence of the number of discontinuity of its iterates is bounded and the element has to be a multirotation.

In this article, we prove an analogous theorem for generalized interval exchange transofrmation, though a bit stronger as we provide a characterization of the distortion elements in $\mathrm{Homeo}_<(K)$ and in $\mathrm{GIET}$. We prove that the distorted elements in those groups are the elements such that the number of discontinuity of its iterates is bounded (see Theorem \ref{thm:equivalence} and Theorem \ref{thm:equivalencegiet} for precise statements). We also prove that the latter condition is equivalent to being a multihomeomorphism of the circle (see Definition \ref{def:multihomeomorphism}). 

This last equivalence provides a dynamical system result which is interesting in itself. It generalizes theorems by Li about interval exchange transformations whose iterates have a bounded number of discontinuities -they are the multirotations- (see \cite{Li}) and by Minakawa about piecewise-linear homeomorphisms of the closed interval with a bounded number of discontinuities and discontinuities of the derivative (see \cite{Min}).

\subsection*{Acknowledgement}

The authors would like to thank the International Research Laboratory IRL-2030 Instituto Franco-Uruguayo de Matemática e Interacciones (IFUMI) in which this article was prepared.

\section{Statement of the results} \label{Sec:statements}

Let $K$ be a Cantor subset of $\mathbb{S}^1$, that is an infinite closed totally disconnected compact subset such that any point is an accumulation point.

\begin{definition}
A locally monotone homeomorphism of $K$ is a homeomorphism $f$ of $K$ such that, for any point $x$ of $K$, there exist open intervals $I$ and $J$ of $\mathbb{S}^1$ which contain respectively $x$ and $f(x)$ as well as a homeomorphism $\tilde{f}:I \rightarrow J$ such that $f_{|I \cap K}=\tilde{f}_{I \cap K}$. 
\end{definition}

Notice that, equivalently, a homeomorphism of $K$ is locally monotone if, for any point of $x$ of $K$, there exist open intervals $I$ and $J$ of $\mathbb{S}^1$ which contain respectively $x$ and $f(x)$, such that $f_{|I \cap K}$ is monotone and sends $I \cap K$ to $J \cap K$. 

We denote by $\mathrm{Homeo}_< (K)$ the group of locally monotone homeomorphisms of $K$.

\begin{definition}
Let $f \in \mathrm{Homeo}_<(K)$. We call discontinuity of $f$ any point $a$ of $K$ which bounds a connected component $I$ of $\mathbb{S}^1 \setminus K$ such that $f(\min I)$ and $f(\max I)$ do not bound a connected component of $\mathbb{S}^1 \setminus K$. We call continuity point of $f$ any point $a$ of $K$ which is not a discontinuity point of $f$.
\end{definition}

In the course of the article, we will need to generalize this last definition to the case of a Cantor set embedded in a disjoint union of circles. Observe also that, given an interval $I$ of $\mathbb{S}^1$ which does not contain any discontinuity point of $f$ in its interior, the restriction of $f$ to $I \cap K$ extends continuously to a homeomorphism between $I$ and another interval of $\mathbb{S}^1$.  

We denote by $\mathrm{Disc}(f)$ the subset of $K$ consisting of discontinuities of $f$. By compactness of $K$ and local monotonicity, the subset $\mathrm{Disc}(f)$ is finite and we denote by $d(f)$ the cardinality of $\mathrm{Disc}(f)$.

In this article, we are interested in characterizing the distortion elements of the group $\mathrm{Homeo}_{<}(K)$. Let us recall now the notion of distortion element.

For a finitely generated group $G$ with finite generating set $S$, we denote by $\ell_S$ the wordlength with respect to $S$. We say that an element $f$ of $G$ is distorted if 
$$ \lim_{n \rightarrow +\infty} \frac{\ell_S(f^n)}{n}=0.$$
Observe that the above limit always exists and that the notion of distortion element is independent of the chosen generating set. Hence the notion of distortion element can be extended to an element of any group by saying that an element is distorted if and only if it belongs to a finitely generated subgroup in which it is distorted.

To state our result, we need the following definition. For $n \geq 1$, we denote by $\mathbb{S}^1_n$ the disjoint union of $n$ circles. 

\begin{definition} \label{def:multihomeomorphism}
We say that an element $f$ of $\mathrm{Homeo}_{<}(K)$ is a multihomeomorphism of the circle if there exist $n \geq 1$ and a continuous locally monotone map $h:K \rightarrow \mathbb{S}^1_{n}$ which is a homeomorphism onto its image so that $hfh^{-1}$ is the restriction to $h(K)$ of a homeomorphism of $\mathbb{S}^1_{n}$.
\end{definition}

The following theorem states that the element which are distorted are exactly those multihomeomorphisms of the circle.

\begin{theorem} \label{thm:equivalence}
Let $f$ be an element of $\mathrm{Homeo}_{<}(K)$. The following are equivalent.
\begin{enumerate}
\item The element $f$ is distorted in $\mathrm{Homeo}_{<}(K)$;
\item the sequence $(d(f^n))_{n \geq 0}$ is bounded;
\item the element $f$ is a multihomeomorphism of the circle.
\end{enumerate}
\end{theorem}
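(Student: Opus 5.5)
We prove the cycle of implications $1 \Rightarrow 2 \Rightarrow 3 \Rightarrow 1$.

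\emph{Step $1 \Rightarrow 2$.} Here $d$ plays the role of a length function. For $f,g \in \mathrm{Homeo}_<(K)$, a point that is a discontinuity of $gf$ is either a discontinuity of $f$ or the preimage under $f$ of a discontinuity of $g$. Indeed, if $a$ bounds a gap $I$ and $f$ is continuous at $a$, then $f$ sends the endpoints of $I$ to the endpoints of a gap $I'$. If moreover $g$ is continuous at the corresponding endpoint of $I'$, then $gf$ is continuous at $a$. This gives
$$d(gf) \leq d(f)+d(g), \qquad d(f^{-1})=d(f).$$
Now suppose $f$ lies in a finitely generated subgroup with generating set $S$, and put $C=\max_{s \in S} d(s)$. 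Then $d(f^n) \leq C\,\ell_S(f^n)$. This alone does not bound $d(f^n)$, since $\ell_S(f^n)$ may still grow sublinearly. We need a dichotomy: either $d(f^n)$ is bounded, or $d(f^n)$ grows linearly in $n$. Granting this, distortion forces $d(f^n)=o(n)$, which rules out linear growth, so $(d(f^n))$ is bounded.

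To prove the dichotomy, use subadditivity: the limit $\lim d(f^n)/n$ exists, and we must show it is positive when the sequence is unbounded. We study the orbits of the finitely many gaps adjacent to points of $\mathrm{Disc}(f)$. A discontinuity of $f^n$ comes from a point $x$ with $f^k(x) \in \mathrm{Disc}(f)$ for some $k<n$, at which the gap is ``broken'' and not repaired by time $n$. Call a gap endpoint \emph{repaired} if some later discontinuity glues it back to a gap. One then shows that either all breaks are eventually repaired within a bounded time, which gives boundedness, or some orbit of broken gaps is never repaired. In the second case the set of discontinuities of $f^n$ grows by at least one every bounded number of steps. I expect this combinatorial analysis of the gap orbits, essentially a pigeonhole argument on the finitely many gap-endpoint types, to be the \textbf{main obstacle}. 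Periodic gaps and wandering gaps have to be handled separately.

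\emph{Step $2 \Rightarrow 3$.} Let $d(f^n) \leq D$ for all $n$. Consider the finite set $E$ of pairs of gap endpoints $(\min I, \max I)$ that are separated by $f^n$ for some $n \in \mathbb{Z}$. This set is finite, since there are at most $2D$ such pairs and only finitely many gaps are involved. Cut $K$ at all these gaps. This yields finitely many ``arcs''. Reglue them according to how $f^n$ maps the endpoints, so that every gap whose endpoints are sent to a gap by some iterate is glued accordingly. The glued endpoints form a finite invariant structure. One checks that after regluing, the ends close up into a disjoint union of $n$ circles. Each arc end is matched by exactly one other end, by consistency of the $f$-images of the finitely many broken gaps. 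The induced map extends across the remaining gaps to a homeomorphism of $\mathbb{S}^1_n$, with the extension done affinely on the gaps. The embedding $h$ is locally monotone by construction.

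\emph{Step $3 \Rightarrow 1$.} Since $h$ is a locally monotone homeomorphism onto its image, we may conjugate and assume $f$ is the restriction of a homeomorphism $F$ of $\mathbb{S}^1_n$. Replacing $f$ by a power, we may assume $F$ preserves each circle and its orientation. Distortion is insensitive to passing to powers and to multiplying by finite-order commuting elements. On each circle, $\mathrm{Homeo}_+(\mathbb{S}^1)$ is known to have every element distorted, via the Calegari--Freedman-type construction. That construction writes $F^n$ as a word of length $O(\log n)$ in a fixed finite set of homeomorphisms built from $F$ and a contracting map. We must perform it inside $\mathrm{Homeo}_<(K)$, so the auxiliary elements have to preserve $K$ and be locally monotone there. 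We therefore choose them as locally monotone homeomorphisms of the Cantor set $h(K)$, using that any two clopen pieces of $K$ are locally monotonically homeomorphic, and mimic the Avila/Calegari--Freedman scheme with piecewise maps that shuffle clopen pieces of $K$. In carrying this out, we expect to rely on the earlier results of the paper.
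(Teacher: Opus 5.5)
Your Step $2 \Rightarrow 3$ rests on a false claim. The set $E$ of gaps whose endpoints are separated by some iterate $f^n$ is in general infinite. The bound $d(f^n)\le D$ controls each $\mathrm{Disc}(f^n)$ separately, not their union. Concretely, suppose some $x\in\mathrm{Disc}(f)$ has infinite orbit, and choose it so that $f^{-j}(x)\notin\mathrm{Disc}(f)$ for all $j\ge 1$ (possible since $\mathrm{Disc}(f)$ is finite). Then $f^{-(n-1)}(x)$ is a continuity point of $f^{n-1}$ that $f^{n-1}$ sends to a discontinuity of $f$. By Lemma~\ref{lem:compositiondisc} it lies in $\mathrm{Disc}(f^n)$ for every $n\ge 1$, and these points are pairwise distinct; this is the moving part $f^{-n}(\{b_1,\dots,b_k\})$ in Proposition~\ref{prop:discorbits}. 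So cutting $K$ at all gaps of $E$ produces infinitely many pieces, and your assertion that the ends can be rematched consistently with $f$ is unsupported.

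What is missing is a \emph{finite} cut set together with an $f$-compatible regluing, and the paper obtains this only for a power. There $g=f^p$ has the pair property: $\mathrm{Disc}(g)=\{\alpha_i\}\cup\{\omega_i\}$ with $\omega_i=g(\alpha_i)$ and $\alpha_i\notin\mathrm{Disc}(g^2)$. Gaps at the $\omega_i$ have both endpoints among the $\omega_j$ (Lemma~\ref{lem:alphasstaytogether}). After the conjugations of Lemma~\ref{lem:conjdisc}, one closes up each arc $I_{2i+1}$ with endpoints $\omega_{2i+1},\omega_{2i+2}$ into a circle. This is compatible with $g$ exactly because $g(\omega_{2i+1})$ and $g(\omega_{2i+2})$ bound a gap. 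You also do not address two further ingredients. First, periodic discontinuity points must be removed beforehand by conjugating a power (Proposition~\ref{prop:removingperiodic}). Second, one must pass from $f^p$ back to $f$ (Proposition~\ref{prop:takingroots}). That uses Lemma~\ref{lem:discroot}: the discontinuities of a root are periodic, so one can cut along their finitely many orbits.

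The other two steps are outlines rather than proofs. In $1\Rightarrow 2$ you correctly reduce to the bounded/linear dichotomy (Corollary~\ref{cor:discalternative}) but leave it open. The paper's argument is short. Normalize an infinite orbit so that its discontinuities of $f$ lie in $\{a,\dots,f^{N-1}(a)\}$ with $a,f^{N-1}(a)\in\mathrm{Disc}(f)$. If $a\in\mathrm{Disc}(f^N)$, then $f^{-i}(a)\in\mathrm{Disc}(f^n)$ for $0\le i\le n-N$. Otherwise $\mathrm{Disc}(f^n)\cap\mathcal{O}(a)=B\cup f^{-n}(B)$ for a fixed finite set $B$.

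In $3\Rightarrow 1$, invoking Calegari--Freedman/Avila skips the step that makes the trick work in $\mathrm{Homeo}_<(K)$. The infinite products in Lemma~\ref{lem:distseq} need discontinuity-free factors supported in a fixed proper clopen subset, so that the translates $s_1^i(K'_1)$ are disjoint and converge to a point. The paper gets this by fragmentation (Lemma~\ref{lem:fragmentation}). An orientation-preserving circle homeomorphism restricted to $K$ is a product of three discontinuity-free elements, each supported in a proper clopen interval. Each factor is then written as a single commutator. Lemma~\ref{lem:distseq} is applied to the commutators coming from $f^{n^2}$, giving $\ell_S(f^{n^2})\le 56n+48$.
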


In the next section, we prove the implication $1. \Rightarrow 2.$ as well as other useful properties related to discontinuity points. Sections \ref{s:conjugacy} and \ref{s:distortion} are respectively devoted to the proofs of the implications $2. \Rightarrow 3.$ and $3. \Rightarrow 1.$.

With the same techniques, we can prove an analogous theorem for the group of generalized exchange transformations, that we define now.

\begin{definition}
We call generalized interval exchange transformation any map which is defined on the complement of a finite subset $F$ of $[0,1]$ with values in $[0,1]$, which is continuous on each connected component of $[0,1] \setminus F$ and which is bijective from $[0,1] \setminus F$ onto the complement of a finite subset of $[0,1]$.
\end{definition}

Observe that, as such a map is either increasing or decreasing on each connected component of the complement of $F$, such a map admits a limit on the right and a limit on the left at any point of $[0,1]$. Moreover, a composition of two generalized interval exchange transformations is well defined on the complement of a finite subset of $|0,1]$ and hence defines a generalized interval exchange transformation. Finally, the inverse of a generalized exchange transformation is a generalized exchange transformation.

We want to identify two generalized interval exchange transformations which coincide outside a finite subset. For two given generalized exchanges $f$ and $g$, we write $f \sim g$ if $f$ and $g$ coincide outside a finite subset of $[0,1]$.

\begin{definition}
We call group of generalized interval exchange transformation and we denote by $\mathrm{GIET}$, the quotient of the set of generalized interval exchange transformations endowed with the composition of maps by the equivalence relation $\sim$.
\end{definition}

Using the fact that any generalized interval exchange transformation admits a limit on the left and a limit on the right at any point, the group of interval exchange transformations can be seen as a subgroup of the group of bijections of $[0,1] \times \left\{ -1,1 \right\}$. Indeed, to any element $f$ of $\mathrm{GIET}$, we can associate the bijection $\varphi(f)$ which is defined in the following way.
To a point $(a,-1)$, we associate $(\lim_{a^{-}} f,-1)$ if $f$ is increasing in a left neighborhood of $a$ and $(\lim_{a^{-}} f,1)$ if $f$ is decreasing in a left neighborhood of $a$.
To a point $(a,1)$, we associate $(\lim_{a^{+}} f,1)$ if $f$ is increasing in a left neighborhood of $a$ and $(\lim_{a^{+}} f,-1)$ if $f$ is decreasing in a left neighborhood of $a$.

Fo $f$ in $\mathrm{GIET}$, we call discontinuity of $f$ any point $a$ of $[0,1]$ for which 
$$ \lim_{a^{-}}f \neq \lim_{a^{+}}f.$$
Observe that the set of discontinuity of $f$ does not depend on the chosen representative of $f$ for the equivalence relation $\sim$. Hence, for $f \in \mathrm{GIET}$, we can define $d(f)$ as the cardinal of the set of discontinuities of $F$.

To state a theorem analogous to Theorem \ref{thm:equivalence} which is adapted to this context, we also need to define the relevant notion of multihomeomorphism in this context.

\begin{definition}
We say that an element $f$ of $\mathrm{GIET}$ is a multihomeomorphism of the circle if there exist $n \geq 1$, a map $h$ which is continuous bijection from the complement of a finite subset of $[0,1]$ onto the complement of a finite subset of $\mathbb{S}^1_{n}$, and a homeomorphism $g:\mathbb{S}^1_{n} \rightarrow \mathbb{S}^1_{n}$ such that
$$hfh^{-1}=g$$
outside a finite subset of $\mathbb{S}^1_{n}$.
\end{definition}

Using techniques identical than those used to prove Theorem \ref{thm:equivalence}, we can prove the following.

\begin{theorem} \label{thm:equivalencegiet}
Let $f$ be an element of $\mathrm{GIET}$. The following are equivalent.
\begin{enumerate}
\item The element $f$ is distorted in $\mathrm{GIET}$;
\item the sequence $(d(f^n))_{n \geq 0}$ is bounded;
\item the element $f$ is a multihomeomorphism of the circle.
\end{enumerate}
\end{theorem}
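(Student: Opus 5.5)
The plan is to reduce Theorem \ref{thm:equivalencegiet} to Theorem \ref{thm:equivalence} by blowing up orbits, as sketched in the introduction. Each of the three implications is then transferred through this reduction, rechecking the few places where the GIET setting differs.

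\textbf{Implication $1.\Rightarrow 2.$} Suppose $f$ is distorted in a finitely generated subgroup $G=\langle S\rangle$ of $\mathrm{GIET}$. Discontinuities are subadditive: $d(gh)\le d(g)+d(h)$, since a discontinuity of $gh$ is either a discontinuity of $h$ or a preimage under $h$ of a discontinuity of $g$. Hence $d(f^n)\le C\,\ell_S(f^n)$ with $C=\max_{s\in S}d(s)$, so $d(f^n)=o(n)$. The sequence $d(f^n)$ is also subadditive. To get boundedness, I would take the argument from the next section for $\mathrm{Homeo}_<(K)$ and redo it in $\mathrm{GIET}$. That argument shows that a sublinear but unbounded sequence $d(f^n)$ is impossible, presumably by finding a discontinuity point whose forward orbit keeps creating new discontinuities of $f^n$ at a linear rate. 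It is local and only uses left and right limits, so it transfers directly using the bijection $\varphi(f)$ of $[0,1]\times\{-1,1\}$.

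\textbf{Implication $2.\Rightarrow 3.$} This is the main step, and I expect it to be the main obstacle. Let $D=\bigcup_{n}\mathrm{Disc}(f^n)$ together with its orbit under $f$; the aim is to show it is a finite union of orbits. I would blow up these orbits, replacing each point of $D$ by a gap, together with $0$ and $1$. The result is a locally monotone homeomorphism $\hat f$ of a closed subset $\hat K\subset \R$, with $d(\hat f^{\,n})$ still bounded. If $\hat K$ is not a Cantor set, I would split off its isolated points and the intervals of $\hat K$. The interval pieces carry genuine homeomorphisms, and what remains is a Cantor set, perhaps after a further blow-up of a countable dense orbit set. Applying Theorem \ref{thm:equivalence}, $3.$, gives a locally monotone embedding $h$ into $\mathbb{S}^1_n$ conjugating $\hat f$ to the restriction of a homeomorphism $g$. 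Collapsing the gaps of $h(\hat K)$ by a monotone map of each circle then turns $g$ into a homeomorphism, after possibly merging circles. Composing gives a map $[0,1]\setminus F\to\mathbb{S}^1_n\setminus F'$ with $hfh^{-1}=g$ off a finite set. The delicate point is that this map is only continuous off a finite set, which is exactly why the finiteness of $D$ up to orbits matters. Boundedness of $d(f^n)$ should give this: eventually $\mathrm{Disc}(f^n)$ stabilizes modulo orbits, and I would prove this part directly rather than through the blow-up.

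\textbf{Implication $3.\Rightarrow 1.$} I would copy the construction of Section \ref{s:distortion}. Circle homeomorphisms are distorted in $\mathrm{Homeo}(\mathbb{S}^1)$, for instance by an Avila/Calegari–Freedman type argument using a finitely generated subgroup built from a few pieces. Conjugating back by $h$ places the corresponding generators in $\mathrm{GIET}$, since $h$ is a piecewise continuous bijection and conjugation by it maps circle homeomorphisms into $\mathrm{GIET}$ modulo finite sets. The resulting finitely generated subgroup of $\mathrm{GIET}$ contains $f$ as a distorted element.
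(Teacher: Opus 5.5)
Your plan is viable, and for $1.\Rightarrow 2.$ it is the paper's argument: subadditivity of $d$ plus the bounded-or-linear dichotomy of Corollary \ref{cor:discalternative}. Its proof (Lemma \ref{lem:compositiondisc}, Proposition \ref{prop:discorbits}) only manipulates pairs of one-sided points, so it transfers through $\varphi(f)$. The linear growth comes from the backward iterates $f^{-i}(a)$ of the first discontinuity $a$ of an orbit, when $a$ is a discontinuity of $f^N$.

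For the other two implications your route is genuinely different from the paper, which simply reruns Sections \ref{s:conjugacy} and \ref{s:distortion} inside $\mathrm{GIET}$. For $3.\Rightarrow 1.$ you use that $g\mapsto h^{-1}gh$ is an injective homomorphism $\mathrm{Homeo}(\mathbb{S}^1_n)\to\mathrm{GIET}$. After passing to a power of $g$ preserving each circle and its orientation, distortion in $\mathrm{Homeo}_+(\mathbb{S}^1)$ then transfers at once; it can be taken from Calegari--Freedman, or from the fragmentation and commutator trick of Section \ref{s:distortion} run on the circle. This is cleaner than the paper's method and is special to $\mathrm{GIET}$. In $\mathrm{Homeo}_<(K)$ the distorting generators would have to preserve $h(K)$, which is why the paper rebuilds the commutator trick inside $\mathrm{Homeo}_<(K)$. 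For $2.\Rightarrow 3.$ you reduce to Theorem \ref{thm:equivalence} by blowing up, trading a re-verification of Section \ref{s:conjugacy} for bookkeeping of finite exceptional sets.

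Two points of that bookkeeping need correcting.

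First, blow up from the start a countable dense $f$-invariant set $Q$ containing the orbits of $\mathrm{Disc}(f)$, $0$ and $1$; then $\hat K$ is automatically a Cantor set. Your splitting into isolated points and intervals is unnecessary, since $\hat K$ never has isolated points. It also fails as stated, even under hypothesis $2$. Take a circle homeomorphism with a single fixed point $p$, cut open at another point. The interval components of $\hat K$ are then infinitely many, their union is not closed, and what remains is $\{p\}$, not a Cantor set.

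Second, you misplace the finiteness. The set $D$ lies in the finitely many orbits of points of $\mathrm{Disc}(f)$ trivially, without using boundedness. Consider the composite of three maps: the inverse of the gap-collapsing map $\hat K\to[0,1]$ (defined on $[0,1]\setminus Q$), the embedding $h$, and the map collapsing the gaps of $h(\hat K)$ in each circle. This composite extends across every point of $Q$ at which $h$ is continuous. It is a continuous bijection off finite sets because a locally monotone embedding has only finitely many discontinuities, by compactness. Boundedness of $d(f^n)$ enters only through $d(\hat f^{\,n})\le 2d(f^n)+2$, to invoke Theorem \ref{thm:equivalence}. No separate stabilization argument is needed.
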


\section{Preliminaries on discontinuity points} \label{s:discontinuity}

The following lemma sums up easy and useful properties of continuity and discontinuity points.

\begin{lemma} \label{lem:compositiondisc}
Let $f,g \in \mathrm{Homeo}_{<}(K)$.
\begin{enumerate}
\item If $x$ is a continuity point of $g$ and $g(x)$ is a continuity point of $f$, then $x$ is a continuity point of $f \circ g$;
\item If $x$ is a continuity point of $g$ and $g(x)$ is a discontinuity point of $f$, then $x$ is a discontinuity point of $f \circ g$;
\item If $x$ is a discontinuity point of $g$ and $g(x)$ is a continuity point of $f$, then $x$ is a discontinuity point of $f \circ g$;
\item $\mathrm{Disc}(f \circ g) \subset \mathrm{Disc}(g) \cup g^{-1}(\mathrm{Disc}(f))$;
\item $\mathrm{Disc}(f^{-1})=f(\mathrm{Disc}(f))$.
\end{enumerate}
\end{lemma}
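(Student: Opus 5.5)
Since every item is local at a single point, I will first turn the definition of (dis)continuity into a statement about gaps. For a point $x\in K$ that bounds a gap (a connected component $I$ of $\mathbb{S}^1\setminus K$), write $x'$ for the other endpoint of $I$. The definition then says: $x$ is a continuity point of $f$ exactly when $f(x)$ and $f(x')$ bound a common gap. A point of $K$ that bounds no gap is always a continuity point. Each point bounds at most one gap, because $K$ has no isolated points. Finally, since $f$ is a bijection of $K$, $f(x)$ and $f(x')$ are distinct points of $K$.

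\textbf{Items 1 and 2.} Assume $x$ bounds a gap, since otherwise both claims are trivial or vacuous. If $x$ is a continuity point of $g$, then $g(x)$ and $g(x')$ bound a gap $J$. Hence $g(x)$ bounds a gap, and the other endpoint of that gap is $(g(x))'=g(x')$.
\begin{itemize}
\item If in addition $g(x)$ is a continuity point of $f$, then $f(g(x))$ and $f(g(x'))$ bound a gap. This is item 1.
\item If instead $g(x)$ is a discontinuity point of $f$, then $f(g(x))$ and $f(g(x'))$ do not bound a gap. This says exactly that $x$ is a discontinuity point of $f\circ g$, which is item 2.
\end{itemize}

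\textbf{Item 3.} This is the step needing a little care. Suppose $g(x)$ and $g(x')$ do not bound a gap, and $g(x)$ is a continuity point of $f$. We must show that $fg(x)$ and $fg(x')$ do not bound a gap.
\begin{itemize}
\item If $g(x)$ bounds no gap, I argue by contradiction. Suppose $fg(x)$ and $fg(x')$ bound a gap. Then $fg(x)$ bounds a gap, and the other endpoint of that gap is $fg(x')$. Now apply item 2 of the definition logic to $f^{-1}$ in the following form: if $y=fg(x)$ bounds a gap and $f^{-1}(y)$ bounds no gap, then $y$ is a discontinuity point of $f^{-1}$. Combined with item 5 (proved below), this gives that $g(x)$ is a discontinuity point of $f$, a contradiction. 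A cleaner route is to prove item 5 first and then argue symmetrically.
\item If $g(x)$ bounds a gap with other endpoint $(g(x))'\neq g(x')$, then continuity of $f$ at $g(x)$ means $fg(x)$ bounds a gap whose other endpoint is $f((g(x))')$. By injectivity of $f$, this endpoint differs from $fg(x')$. Since $fg(x)$ bounds only one gap, $fg(x)$ and $fg(x')$ cannot bound a gap.
\end{itemize}

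\textbf{Item 5.} Let $y=f(x)$. I claim that $y$ is a continuity point of $f^{-1}$ if and only if $x$ is a continuity point of $f$. The key observation is that gap-pairs map to gap-pairs in both directions: $f$ sends the pair $\{x,x'\}$ to a gap-pair if and only if $f^{-1}$ sends the pair $\{y,y'\}$ back to a gap-pair. To check this, consider the gaps bounded by $x$ and by $y$.
\begin{itemize}
\item If $x$ bounds no gap, then $x$ is a continuity point of $f$. Suppose $y$ bounds a gap with other endpoint $y'$. Then $f^{-1}(y)=x$ and $f^{-1}(y')$ cannot bound a common gap, so $y$ is a discontinuity point of $f^{-1}$. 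This case needs the convention that such $x$ counts as a continuity point while $y$ counts as a discontinuity, and so requires the statement to be read with the definition exactly as given. That is the main obstacle: one must check that local monotonicity forces $x$ to bound a gap whenever $f(x)$ does.
\end{itemize}
I would settle this using local monotonicity. Near $x$, $f$ is the restriction of a homeomorphism between intervals $\tilde f: I_0 \to J_0$. Such a homeomorphism maps points of $K$ that are one-sided accumulation points of $K$ to one-sided accumulation points, and gap endpoints are exactly the points that fail to be two-sided accumulation points. Therefore $x$ bounds a gap if and only if $f(x)$ does. The remaining cases then follow by the same injectivity argument as in item 3.

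\textbf{Item 4.} This is the contrapositive of item 1: if $x\notin \mathrm{Disc}(g)$ and $g(x)\notin\mathrm{Disc}(f)$, then $x\notin\mathrm{Disc}(f\circ g)$.
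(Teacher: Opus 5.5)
Your proof is correct and follows the same route as the paper: you unwind the definition through the partner endpoint $x'$, obtain item 4 as the contrapositive of item 1, and obtain item 5 by matching continuity of $f$ at $x$ with continuity of $f^{-1}$ at $f(x)$. You also correctly isolate a fact the paper uses silently (both in calling items 1--3 ``consequences of the definition'' and in its proof of item 5), namely that by local monotonicity $x$ bounds a gap if and only if $f(x)$ does. You should state and prove that fact at the outset, since it is exactly what makes the case ``$x$ bounds no gap'' vacuous in item 2 and the first subcase of item 3 empty, so the detour through item 5 there becomes unnecessary.
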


\begin{proof}
The first three points are consequences of the definition of continuity and discontinuity point.
Clearly, 
$$\left(K \setminus \mathrm{Disc}(g)\right) \cap g^{-1} \left( K \setminus \mathrm{Disc}(f) \right) \subset K \setminus \mathrm{Disc}(f \circ g).$$
Taking the complement, we obtain the fourth point.
If a point belongs to $f\left( K \setminus \mathrm{Disc}(f)\right)$, then it is not a discontinuity of $f^{-1}$ so that, taking complements,
$$\mathrm{Disc}(f^{-1}) \subset f\left( \mathrm{Disc}(f) \right).$$
Exchanging the roles of $f$ and $f^{-1}$ in the above inclusion implies that $f \left(\mathrm{Disc}(f)\right) \subset \mathrm{Disc}(f^{-1})$.
 
\end{proof}

We fix an element $f \in \mathrm{Homeo}_{<}(K)$. For a point $a$ of $K$, we denote 
$$\mathcal{O}(a)= \left\{ f^{k}(a) \ | \ k \in \mathbb{Z} \right\} $$
the orbit of $a$ under $f$. For any set $A$, we denote by $|A|$ its cardinality. For a point $a$ whose orbit $\mathcal{O}(a)$ does not contain any discontinuity point, by Lemma \ref{lem:compositiondisc}, the iterates $f^{n}$ do not contain any discontinuity point on $\mathcal{O}(a)$. When the orbit of $a$ is infinite and contains some discontinuity point, we have the following proposition.

\begin{proposition} \label{prop:discorbits}
Let $a$ be a point of $K$ with infinite orbit under $f$ such that $\mathcal{O}(a) \cap \mathrm{Disc}(f) \neq \emptyset$. Then
\begin{enumerate}
\item either
$$\lim_{n \rightarrow +\infty}\frac{|\mathrm{Disc(f^{n})} \cap \mathcal{O}(a)|}{n}=1;$$
\item or there exists points $b_1, b_2, \ldots, b_k$ of $\mathcal{O}(a)$ such that, for any sufficiently large $n$,
$$\mathrm{Disc}(f^{n}) \cap \mathcal{O}(a)=\left\{ b_1,b_2, \ldots,b_k \right\} \cup f^{-n}(\left\{ b_1,b_2, \ldots,b_k \right\}).$$
Moreover, the subset $f^{-n}(\left\{ b_1,b_2, \ldots,b_k \right\})$ is not contained in $\mathrm{Disc}(f^{2n})$.
\end{enumerate}
\end{proposition}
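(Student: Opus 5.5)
We index the orbit by $\mathbb{Z}$, which is possible because the orbit is infinite. Put $a_k=f^k(a)$ and let $D=\{k\in\mathbb{Z} \ | \ a_k\in \mathrm{Disc}(f)\}$. This set is finite because $\mathrm{Disc}(f)$ is finite, and nonempty by hypothesis; let $m=\min D$ and $M=\max D$. Writing $f^n=f\circ\cdots\circ f$, whether $a_k$ is a discontinuity of $f^n$ only depends on the behaviour of $f$ at the points $a_k,a_{k+1},\ldots,a_{k+n-1}$, that is, on the window $W_{k,n}=[k,k+n-1]\cap\mathbb{Z}$.

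\emph{Step 1: windows that miss $D$.} If $W_{k,n}\cap D=\emptyset$, then by iterating point 1 of Lemma \ref{lem:compositiondisc}, $a_k$ is a continuity point of $f^n$. Hence the discontinuities of $f^n$ in $\mathcal{O}(a)$ are among the $a_k$ with $k\in[m-n+1,M]$. In particular there are at most $n+M-m$ of them.

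\emph{Step 2: windows that contain all of $D$.} Suppose $W_{k,n}\supset[m,M]$. Decompose
$$f^n=f^{k+n-1-M}\circ f^{M-m+1}\circ f^{m-k}.$$
The outer factors are continuous at the relevant points of the orbit by Step 1. Points 1–3 of Lemma \ref{lem:compositiondisc} then show that $a_k$ is a discontinuity of $f^n$ if and only if $a_m$ is a discontinuity of the block $f^{M-m+1}$. So this property depends neither on $k$ nor on $n$, and we split into two cases.

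\emph{Case A: the block is discontinuous at $a_m$.} Then every $k\in[M-n+1,m]$ gives a discontinuity of $f^n$, and there are $n-(M-m)$ such $k$. Together with the upper bound $n+M-m$ from Step 1, this gives the limit $1$, which is alternative 1.

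\emph{Case B: the block is continuous at $a_m$.} Take $n>M-m+1$. The only candidate discontinuities are then windows meeting $D$ only partially. These are the indices $k\in[m+1,M]$, whose window contains $D\cap[k,M]$, and the indices $k=j-n$ with $j\in[m+1,M]$, whose window contains $D\cap[m,j-1]$; these two ranges are disjoint.

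For $k\in[m+1,M]$, the window extends past $M$, and $f$ is continuous there. So, by the same composition argument, $a_k\in\mathrm{Disc}(f^n)$ if and only if $a_k\in \mathrm{Disc}(f^{M-k+1})$, a condition independent of $n$. Let $\{b_1,\ldots,b_r\}$ be the set of these $a_k$.

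For $k=j-n$, the analogous condition is that $a_{m}$ is a discontinuity of $f^{j-m}$. The key step, and the one requiring the most care, is to show that this happens exactly when $a_j\in\{b_i\}$. Since the block $f^{M-m+1}=f^{M-j+1}\circ f^{j-m}$ is continuous at $a_m$, points 2 and 3 of Lemma \ref{lem:compositiondisc} applied contrapositively give two implications. If $f^{j-m}$ is continuous at $a_m$, then $f^{M-j+1}$ must be continuous at $a_j$. If $f^{j-m}$ is discontinuous at $a_m$, then $f^{M-j+1}$ must be discontinuous at $a_j$ (otherwise point 3 would make the block discontinuous). Hence $a_{j-n}\in\mathrm{Disc}(f^n)$ if and only if $a_j\in\{b_i\}$. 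This yields
$$\mathrm{Disc}(f^n)\cap\mathcal{O}(a)=\{b_i\}\cup f^{-n}(\{b_i\}).$$

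\emph{The last claim.} A point $f^{-n}(b_i)=a_{j-n}$ with $j\in[m+1,M]$ has window $[j-n,j+n-1]$ for $f^{2n}$. For $n$ large this window contains $[m,M]$, so by Step 2 in Case B the point is a continuity point of $f^{2n}$. Thus $f^{-n}(\{b_i\})$ is not contained in $\mathrm{Disc}(f^{2n})$; indeed it is disjoint from it, provided this set is nonempty.

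The main delicacy is in applying points 1–3 of Lemma \ref{lem:compositiondisc} to iterated compositions. In particular, when $a_k$ bounds gaps on both sides, one must check that the definition of discontinuity, which asks for a bounding gap whose image is not a gap, is compatible with the composition rules used above. I would verify this once, as a small lemma, before running the window argument.
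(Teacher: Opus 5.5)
Your proof is correct and is essentially the paper's argument: your block $f^{M-m+1}$ at $a_m$ is the paper's $f^{N}$ at $a$ after the shift $a\mapsto f^{N_1}(a)$. Your Case A/Case B split, the three index ranges in Case B and the $f^{2n}$ argument all match the paper's steps, and your contrapositive use of points 2 and 3 for the indices $k=j-n$ is written more cleanly than the corresponding step in the paper. The delicacy you flag at the end does not arise, because $K$ has no isolated points and so no point of $K$ bounds gaps on both sides; also, in Case B the set $\{b_i\}$ is automatically nonempty, because $M>m$ there, which forces $a_M\in\{b_i\}$.
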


In the second case, the sequence $(|\mathrm{Disc}(f^{n}) \cap \mathcal{O}(a)|)_{n \geq 0}$ is bounded.
\begin{remark} \label{rema:2.3} The subset $\left\{ b_1,b_2, \ldots,b_k \right\}$ is contained in $\mathrm{Disc}(f^{2n})$, since any point of $f^{n}(\left\{ b_1,b_2, \ldots,b_k \right\})$ is a continuity point of $f^n$.
\end{remark}

\begin{proof}

As the set $\mathrm{Disc}(f) \cap \mathcal{O}(a)$ is finite, there exist integers $N_1 \leq N_2$ such that 
$$\mathrm{Disc(f)} \cap \mathcal{O}(a)\subset \left\{ f^{k}(a) \ | \ N_1 \leq k \leq N_2 \right\}$$
and $f^{N_1}(a)$ and $f^{N_2}(a)$ both belong to $\mathrm{Disc}(f)$.

Changing $a$ into $f^{N_1}(a)$ if necessary, we suppose from now on that there exists $N \geq 1$ such that
$$\mathrm{Disc}(f) \cap \mathcal{O}(a)\subset \left\{ f^{k}(a) \ | \ 0 \leq k < N \right\}$$
and $a$ and $f^{N-1}(a)$ both belong to $\mathrm{Disc}(f)$.
Observe that, for any $n \geq 0$, and any $k \geq N$ or $k \leq -n$, the point $f^{k}(a)$ is a continuity point of $f^{n}$ so that $\mathrm{Disc}(f^n) \subset \left\{f^{i}(a) \ |  -n< i < N \right\}$. Hence
$$ |\mathrm{Disc}(f^{n}) \cap \mathcal{O}(a)| \leq n+N-1.$$
The alternative will depend on whether $a$ is a discontinuity point of $f^N$ or not.

Suppose first that $a$ is a discontinuity point of $f^N$. Take $n\geq N$ and $0 \leq i \leq n-N$. Then the point $f^{-i}(a)$ is a discontinuity point of $f^n=f^{n-N-i}f^N f^{i}$ by points 2. and 3. of Lemma \ref{lem:compositiondisc} as $f^{-i}(a)$ is a continuity point of $f^{i}$ and $f^{N}(a)$ is a continuity point of $f^{n-N-i}$. Hence

$$|\mathrm{Disc}(f^{n}) \cap \mathcal{O}(a)| \geq |\left\{f^{-i}(a) \ | 0 \leq i \leq n-N \right\}|=n-N+1$$
so that 
$$\lim_{n \rightarrow +\infty}\frac{|\mathrm{Disc(f^{n})} \cap \mathcal{O}(a)|}{n}=1.$$

Suppose now that the point $a$ is a continuity point of $f^N$. Fix $n \geq N$. 

For $N-n \leq i\leq 0$, as $f^{i}(a)$ is a continuity point of $f^{-i}$ and $a$ is a continuity point of $f^{n+i}=f^{n-N+i} \circ f^{N}$, then $f^{i}(a)$ is a continuity point of $f^{n}=f^{n+i}\circ f^{-i}$, by the first point of Lemma \ref{lem:compositiondisc}.

For $0<i<N$, by points 1. and 3. of Lemma \ref{lem:compositiondisc}, the point $f^{i}(a)$ is a continuity point of $f^{n}=f^{n-N+i}\circ f^{N-i}$ if and only if it is a continuity point of $f^{N-i}$, as $f^{N}(a)$ is a continuity point of $f^{n-N+i}$.

For $-n<i < N-n$, by points 1. and 2. of Lemma \ref{lem:compositiondisc}, the point $f^{i}(a)$ is a continuity point of $f^{n}=f^{n+i}f^{-i}$ if and only if $a$ is a continuity point of $f^{n+i}$.

If $j=n+i$, the latter condition is equivalent to saying that the point $f^{j}(a)$ is a continuity point of $f^{-j}$ or that the point $f^{j}(a)$ is a continuity point of $f^{N-j}$, as $a$ is a continuity point of $f^{N}$. Hence $f^{i}(a)$ is a continuity point of $f^{n}$ if and only if $f^{i+n}(a)$ is a continuity point of $f^{N-(n+i)}$. By the previous case it is equivalent to  $f^{i+n}(a)$ is a continuity point of $f^{n}$.

Therefore, there exist points $b_1, b_2, \ldots, b_k$ of $\mathcal{O}(a)$ such that, 
$$\mathrm{Disc}(f^{n}) \cap \mathcal{O}(a)=\left\{ b_1,b_2, \ldots,b_k \right\} \cup f^{-n}(\left\{ b_1,b_2, \ldots,b_k \right\}).$$
Analogously, the points of discontinuity of $f^{2n}$ in the orbit of $a$ are $\left\{ b_1,b_2, \ldots,b_k \right\} \cup f^{-2n}(\left\{ b_1,b_2, \ldots,b_k \right\}),$ so the subset $f^{-n}(\left\{ b_1,b_2, \ldots,b_k \right\})$ consists of $f^{2n}$-continuity points.

\end{proof}

As there are finitely many orbits of $f$ which contain a discontinuity of $f$, we deduce immediately the following corollary from Proposition \ref{prop:discorbits}.

\begin{corollary} \label{cor:discalternative}
Let $f \in \mathrm{Homeo}_{<}(K)$. Then
\begin{enumerate}
\item either the sequence $(d(f^n))_n$ is bounded;
\item or the sequence $(d(f^n)/n)_n$ has a nonzero limit.
\end{enumerate}
\end{corollary}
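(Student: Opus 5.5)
The plan is to split the discontinuity set of $f^n$ according to the $f$-orbits that meet $\mathrm{Disc}(f)$ and apply Proposition \ref{prop:discorbits} to each of them separately.

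\emph{Step 1: only finitely many orbits matter.} By point 4 of Lemma \ref{lem:compositiondisc} and induction on $n$, applied to $f^{n}=f\circ f^{n-1}$,
$$\mathrm{Disc}(f^{n}) \subset \bigcup_{k=0}^{n-1} f^{-k}(\mathrm{Disc}(f)).$$
So every discontinuity of $f^n$ lies in one of the finitely many orbits $\mathcal{O}_1,\ldots,\mathcal{O}_m$ that meet the finite set $\mathrm{Disc}(f)$, and
$$d(f^n)=\sum_{j=1}^m |\mathrm{Disc}(f^n)\cap \mathcal{O}_j|.$$

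\emph{Step 2: each orbit contributes either a bounded quantity or a quantity with limit ratio $1$.}
\begin{itemize}
\item If $\mathcal{O}_j$ is finite, the term $|\mathrm{Disc}(f^n)\cap\mathcal{O}_j|$ is bounded by $|\mathcal{O}_j|$.
\item If $\mathcal{O}_j$ is infinite, Proposition \ref{prop:discorbits} applies and gives one of two outcomes. In the first, $|\mathrm{Disc}(f^n)\cap\mathcal{O}_j|/n\to 1$. In the second, $|\mathrm{Disc}(f^n)\cap\mathcal{O}_j|\le 2k_j$ for $n$ large, so the term is eventually bounded; since only finitely many $n$ remain, it is bounded for all $n$.
\end{itemize}

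\emph{Step 3: conclude.} Let $p$ be the number of infinite orbits in the first case of Proposition \ref{prop:discorbits}. All other terms are bounded, so
$$\frac{d(f^n)}{n}\longrightarrow p.$$
If $p=0$, then $d(f^n)$ is a finite sum of bounded sequences and is therefore bounded. If $p\geq 1$, the ratio $d(f^n)/n$ has the nonzero limit $p$.

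There is no real obstacle. The only point needing care is Step 1: the orbit decomposition must capture every discontinuity of $f^n$, which is exactly what the inclusion from point 4 of Lemma \ref{lem:compositiondisc} provides.
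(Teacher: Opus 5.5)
Your proof is correct and follows the paper's argument. The paper's one-line proof notes that only finitely many $f$-orbits meet $\mathrm{Disc}(f)$ and applies Proposition \ref{prop:discorbits} to each of them. You fill in the details it leaves implicit: the inclusion $\mathrm{Disc}(f^n)\subset\bigcup_{k<n}f^{-k}(\mathrm{Disc}(f))$, the bounded contribution of finite orbits, and the fact that the limit is the number of orbits falling in the first case.
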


\begin{proof}[Proof of the implication $1. \Rightarrow 2.$ in Theorem \ref{thm:equivalence}]
Suppose $f$ is distorted in $\mathrm{Homeo}_{<}(K)$. Then there exists a finite symmetric subset $S$ of $\mathrm{Homeo}_{<}(K)$ such that $f$ belongs to the subgroup generated by $S$ and
$$ \lim_{n \rightarrow +\infty}\frac{\ell_S(f^{n})}{n}=0.$$
For any $n\geq 0$, decompose $f$ as a product of elements of $S$,
$$f^{n}=s_{1,n} \ldots s_{k_n,n},$$
where $k_n=\ell_S(f^n)$ and $s_{i,j} \in S$. Set $\displaystyle M= \max_{s \in S} d(s)$. By point 4. of Lemma \ref{lem:compositiondisc}, for any $n \geq 0$, 
$$d(f^{n}) \leq \sum_{i=1}^{k_n} d(s_{i,n}) \leq k_n M.$$
so that $\displaystyle \lim_{n \rightarrow +\infty} \frac{d(f^n)}{n}=0$ and Corollary \ref{cor:discalternative} allows to conclude.
\end{proof}

We finish this section with the following classical lemma which will be useful. Fix a family $(I_{1,k})_{1\leq k \leq n}$ of pairwise disjoint clopen intervals of $K$ so that the union of any two intervals of the family is not an interval and so that we encounter in this order $I_{1,1}, I_{1,2},\ldots,I_{1,k}$ when we follow a given orientation of the circle. Fix a second such family $(I_{2,k})_{1\leq k \leq n}$.

\begin{lemma} \label{lem:existhomeo}
There exists an element $f$ of $\mathrm{Homeo}_{<}(K)$ with no discontinuity points such that, for any $k$, $f(I_{1,k})=I_{2,k}$. 
\end{lemma}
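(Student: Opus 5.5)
The plan is to build $f$ piece by piece from increasing order-isomorphisms between clopen intervals of $K$, and then glue the pieces so that the cyclic order is respected. For the family $(I_{1,k})$, let $J_{1,k}$ be the part of $K$ lying strictly between $I_{1,k}$ and $I_{1,k+1}$, following the chosen orientation (indices mod $n$). For $n=1$, $J_{1,1}=K\setminus I_{1,1}$. Define $J_{2,k}$ in the same way.

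Since $I_{1,k}\cup I_{1,k+1}$ is not an interval, $J_{1,k}$ is nonempty. It is also a clopen interval of $K$: it is $K$ intersected with an arc whose endpoints lie in components of $\mathbb{S}^1\setminus K$. Hence it is a nonempty clopen subset of a Cantor set, so it is itself a Cantor set with a least and a greatest element. The same holds for $J_{2,k}$. The $2n$ sets $I_{1,1},J_{1,1},I_{1,2},J_{1,2},\dots$ partition $K$ and appear in this cyclic order, and likewise for the second family. For $n=1$, where $J_{1,1}=K\setminus I_{1,1}$, I also assume that $I_{1,1}\neq K$ and $I_{2,1}\neq K$.

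The key step is the following claim. If $A$ and $B$ are nonempty clopen intervals of $K$, then there is an increasing bijection $\varphi:A\to B$ that extends to an increasing homeomorphism between the convex hulls of $A$ and $B$ in $\mathbb{S}^1$. This is the step I expect to be the main obstacle. I would prove it by a nested subdivision.

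Each nonempty clopen interval of $K$ can be cut into two nonempty clopen subintervals by choosing a gap of $K$ inside its hull. By choosing the gaps well, this can be done so that the diameters of the pieces after $m$ steps tend to $0$ as $m\to\infty$. I perform this subdivision simultaneously on $A$ and $B$, matching the left piece with the left piece and the right with the right, so that the pieces are indexed by finite binary words. Each point of $A$ is the intersection of a unique decreasing chain of pieces, and I send it to the point of $B$ determined by the corresponding chain. This gives an increasing bijection $\varphi:A\to B$.

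An increasing bijection between linearly ordered sets preserves adjacency. So if $x<y$ are the two endpoints of a gap of $A$, then $\varphi(x)<\varphi(y)$ are the two endpoints of a gap of $B$. Extending $\varphi$ affinely across each gap therefore gives an increasing bijection of the hulls, which is a homeomorphism.

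Finally I apply the claim to each pair $I_{1,k}\to I_{2,k}$ and $J_{1,k}\to J_{2,k}$, and let $f$ be the union of these $2n$ maps. Then $f$ is a bijection of $K$. On each piece it is the restriction of an increasing homeomorphism between arcs, so $f$ is a locally monotone homeomorphism of $K$.

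It remains to check that $f$ has no discontinuities. A gap of $K$ lies either inside the hull of one of the pieces, where it is handled by the claim, or between two cyclically consecutive pieces. In the second case its endpoints are the maximum of one piece and the minimum of the next. Since $f$ sends each piece increasingly onto the piece in the same cyclic position, these two points go to the maximum of one image piece and the minimum of the next. These bound a gap of $K$, so every gap is sent to a gap and $\mathrm{Disc}(f)=\emptyset$. By construction $f(I_{1,k})=I_{2,k}$ for every $k$.
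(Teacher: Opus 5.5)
The paper does not prove this lemma (it is quoted as classical), so there is no argument of the authors to compare with. Your proof is correct and is the standard one; your binary subdivision plays the role of the usual back-and-forth argument on the countable dense order of gaps.

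The step you flag as the main obstacle is routine. Since $K$ has empty interior, the open middle third of the hull of any piece contains a point outside $K$. Cutting at the gap through that point gives two pieces whose hulls have length less than $2/3$ of the original. So at level $m$ all diameters are at most $(2/3)^m$ times the initial one, on $A$ and on $B$ independently.

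Your caveat for $n=1$ is genuinely necessary: the statement fails if exactly one of $I_{1,1}$, $I_{2,1}$ equals $K$, and $f=\mathrm{Id}$ works if both do. You are also reading ``a given orientation'' as the same orientation for both families. If the second family were listed along the opposite orientation, the same gluing works with decreasing maps on every piece, obtained by matching left subpieces with right subpieces.

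Your gluing step actually proves more than the statement. Any choice of increasing bijections between corresponding pieces which extend to homeomorphisms of their hulls glues to an element of $\mathrm{Homeo}_<(K)$ without discontinuities. For instance, you may take the identity on a piece with $I_{1,k}=I_{2,k}$, or a prescribed map on some pieces. This flexible form, together with the decreasing version of your key claim, is what the paper actually relies on when it invokes the lemma. Examples are the maps supported in a given interval in Lemma~\ref{lem:fragmentation} and the flips in the proof of Proposition~\ref{prop:removingperiodic}.
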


\section{Construction of the semiconjugacy} \label{s:conjugacy}

In this section, we prove the implication $2. \Rightarrow 3.$ in Theorem \ref{thm:equivalence}. We suppose that the sequence $(|\mathrm{Disc}(f^n)|)_n$ is bounded.

Denote by $\Sigma$ the finite subset of $K$ which is the union of the periodic orbits of $f$ which contain a discontinuity point of $f$. By Proposition \ref{prop:discorbits}, there exists a finite set $F$, consisting of points with infinite orbits under $f$ such that, for any sufficiently large $n$,
$$\mathrm{Disc}(f^n) \subset F \cup f^{-n}(F) \cup \Sigma.$$

We will divide the proof in three steps. In a first step, we will prove that there exists a power of $f$ which admits a conjugate in $\mathrm{Homeo}_{<}(K)$ with at most two periodic points which are discontinuity points. Moreover, those points are fixed.

Then we will use that to prove that a power of $f$ admits a partition in invariant subsets on which the dynamics is semi-conjugated to that of a circle homeomorphism.

Then we will use this last semi-conjugacy to prove that $f$  itself is a multihomeomorphism of the circle.

Each of the above steps is the object of one of the following subsections.

\subsection{Removing the periodic discontinuity points}

\begin{proposition} \label{prop:removingperiodic}
Let $f$ be an element of $\mathrm{Homeo}_<(K)$ such that the sequence $(d(f^n))_{n \geq 0}$ is bounded. Then there exists a homeomorphism $h$ in $\mathrm{Homeo}_<(K)$ and an integer $\ell >0$, such that, if we denote $g=h^{-1}f^{\ell}h$, one of the following properties holds.
\begin{enumerate}
\item Either there exists a decomposition of $K$ in two $g$-invariant clopen subsets $K_1$ and $K_2$, where $K_i$ is the intersection of an interval with $K$, with the following properties.
\begin{enumerate}
    \item The subset $K_2$ does not contain any discontinuity point of $g$ which is a periodic orbit of $g$;
    \item the only $g$-periodic discontinuity points in $K_1$ are the endpoints of $K_1$ and they are fixed points of $g$.
\end{enumerate}
\item Or $g$ has no periodic point which is a discontinuity point.
\end{enumerate}
\end{proposition}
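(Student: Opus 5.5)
We first take a power of $f$ that fixes every point of the finite set $\Sigma$, then remove the resulting fixed discontinuity points by conjugating with a map that "cuts the circle" at those points. The proof rests on two facts: $\Sigma$ is finite (so $\ell$ can be chosen as a common multiple of its periods), and the definition of discontinuity only depends on which complementary gaps of $K$ are sent onto gaps.

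\textbf{Step 1: reduction to fixed points.} Since $\Sigma$ is a finite union of periodic orbits, let $\ell$ be a common multiple of their periods. Then $f^\ell$ fixes $\Sigma$ pointwise. By point 4 of Lemma \ref{lem:compositiondisc}, every periodic discontinuity point of $f^\ell$ lies in $\bigcup_{j<\ell} f^{-j}(\mathrm{Disc}(f))$ and is periodic. Hence it belongs to $\Sigma$ and is fixed by $f^\ell$. Call $P$ the set of fixed points of $f^\ell$ that are discontinuity points of $f^\ell$. If $P=\emptyset$, alternative 2 holds with $h=\mathrm{id}$.

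\textbf{Step 2: local structure at a point of $P$.} Let $p\in P$. The point $p$ bounds a gap $I$ of $\mathbb{S}^1\setminus K$, and $p$ is accumulated by $K$ on its other side. The map $f^\ell$ fixes $p$ and is locally monotone there. If $f^\ell$ preserved orientation near $p$ and sent the gap $I$ to itself, then $p$ would not be a discontinuity. So either $f^\ell$ reverses orientation near $p$, or $f^\ell$ sends the side of $p$ in $K$ to itself while the neighbour of $p$ across $I$ is sent elsewhere. Replacing $\ell$ by $2\ell$ rules out local orientation reversal at the finitely many points of $P$. Indeed, the square of a local orientation reversal at a fixed point preserves orientation. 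The discontinuity then persists for $f^{2\ell}$ because the point $q$ across the gap $I$ from $p$ is not sent to the point across the gap from $f^{2\ell}(p)=p$. To justify this, I would check that $q$ is then not $f^\ell$-periodic, or, if $q$ is periodic, it lies in $\Sigma$ and is fixed as well. In the latter case the gap is actually preserved and $p$ was not a discontinuity.

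\textbf{Step 3: regrouping by conjugation.} The points of $P$ are now fixed points where $K$ is locally preserved on one side. The goal is a conjugacy $h$, built from Lemma \ref{lem:existhomeo}, that rearranges $K$ into two clopen intervals. The set $K_1$ should have as endpoints the fixed discontinuity points, taken in the appropriate order. If $|P|>2$, I would instead cut $K$ at every point of $P$ into clopen intervals $J_1,\dots,J_m$. Each $J_i$ has endpoints in $P$ and is accumulated from inside at those endpoints. I would then reorder the $J_i$ with $h$ so that consecutive intervals glue with matching behaviour. After conjugation by $h$, a point of $P$ becomes a discontinuity only if the gap adjacent to it, in the new cyclic order, is sent to a non-gap.

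\textbf{Main obstacle.} The hardest step is showing that a suitable cyclic reordering exists. Near each $p\in P$, the map $g$ restricted to the interval on each side is a germ fixing $p$. A discontinuity disappears exactly when the two intervals placed next to each other across a gap have germs that glue consistently. I expect the pairing argument to produce at most one unmatched pair of endpoints, and this pair becomes the endpoints of $K_1$. However, checking that the $J_i$ are $g$-invariant is delicate. Discontinuities that are not in $\Sigma$ lie on infinite orbits, so points can move between the pieces $J_i$. I would try first to show that invariance holds for a further power of $f$. The argument would be that each gap endpoint in $P$ is fixed and $K$ is locally preserved on one side of it. As a fallback, I would accept that $K_2$ may be all of $K\setminus K_1$, which matches the weaker form of alternative 1.
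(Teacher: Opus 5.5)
Your Step 1 is correct and matches the first half of the paper's reduction. Step 2 extracts the fact the paper relies on: the gap-partner of a point of $P$ has infinite orbit, since otherwise both ends of the gap would be fixed and $p$ would be a continuity point. The orientation discussion there is vacuous. A fixed point accumulated by $K$ from one side only has that side preserved, so $f^\ell$ is automatically increasing there and passing to $2\ell$ is unnecessary.

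The genuine gap is Step 3, which carries the whole content of the proposition and which you leave as an expectation.
\begin{itemize}
\item Cutting $K$ at the points of $P$ does not give pieces with both endpoints in $P$. The piece following $p\in P$ across its gap starts at the gap-partner of $p$, which has infinite orbit.
\item More seriously, as you note yourself, a piece may contain non-periodic discontinuities in its interior and then need not be $g$-invariant. No global cyclic reordering addresses this.
\item Your fallback $K_2=K\setminus K_1$ does not help, since alternative 1 requires both $K_1$ and $K_2$ to be $g$-invariant.
\item You never establish that $|P|$ is even, without which there is no reason to expect ``at most one unmatched pair''.
\end{itemize}

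The missing idea is to remove the points of $P$ two at a time by a local surgery; write $f$ for the power obtained below.
\begin{itemize}
\item First pass to a power for which $|P|$ is even. Discontinuities come in pairs bounding gaps, and for large powers the non-periodic ones are $\{b_i\}\cup f^{-n}(\{b_i\})$ by Proposition \ref{prop:discorbits}.
\item Take $p\in P$ with $(p,a)\cap K=\emptyset$, and let $q\neq p$ be the first point of $P$ after $p$.
\item If $q$ is accumulated from the left, conjugate by a decreasing flip $h$ of $[a,q]\cap K$, so that $h(a)=q$.
\item If $q$ is accumulated from the right, conjugate instead by the exchange $h$ of $[a,a']\cap K$ and $[q,q']\cap K$, again with $h(a)=q$. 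Here $a'$ is the gap-neighbour of $q$ on its left and $q'$ is the next discontinuity of $f$ after $q$.
\item In either case $h^{-1}fh$ fixes both $p$ and $a$, which bound a gap, so they become continuity points. No new periodic discontinuity is created: a fixed discontinuity at $q'$ is merely transported to $a'$. Hence $|P|$ drops by $2$.
\item This fails only when $q'=p$. Then $[q,p]\cap K$ has fixed endpoints accumulated from inside and no discontinuity in its interior, so it is mapped monotonically onto itself. This is precisely what yields the invariant decomposition $K_1=[q,p]\cap K$, $K_2=K\setminus K_1$.
\end{itemize}
Invariance thus comes from the absence of interior discontinuities, which your reordering scheme cannot guarantee.
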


\begin{proof}

Let $per(f)$ and $fix(f)$ be respectively the set of periodic  and  fixed points of $f$. We begin by noting that as $\mathrm{Disc}(f) \cap per(f)$ is a finite set, there exists an arbitrarily large $N \in \mathbb{N}$ such that $per(f^N) \cap \mathrm{Disc}(f^N)$ coincides with the set of fixed points of $f^N$ that are discontinuity points of $f^N$.

We also claim that the number of discontinuity points of $f^{N}$ that are periodic is even for any large enough $N$. Indeed, observe that the number of discontinuity points of $f^N$ is even and that, by Proposition \ref{prop:discorbits}, for any large enough $N$, the number of discontinuity points of $f^N$ with an infinite orbit is also even, so that the number of discontinuity points of $f^N$ that are periodic is also even.

So from now on, replacing $f$ by $f^N$ for large enough $N$ if necessary, we will assume that $per(f) \cap \mathrm{Disc}(f)=fix(f) \cap \mathrm{Disc}(f)$ and that the cardinality of this subset is even.

Let $p \in fix(f) \cap \mathrm{Disc}(f)$. Without loss of generality we can suppose that there exists $a \in K$ such that $ (p,a) \cap K=\emptyset$. Notice that the orbit of $a$ is necessarily infinite.

There exists a point
$q \in fix(f) \cap \mathrm{Disc}(f)$ such that the interval $(p,q)$  contains no element of $fix(f) \cap \mathrm{Disc}(f)$. As the cardinality of $fix(f) \cap \mathrm{Disc}(f)$ is even, $q \neq p$.

\begin{enumerate}
    \item In the case that there exists $\delta >0$ such that  $(q,q+\delta) \cap K =\emptyset$, we define $h:K\to K$ as a flip in $[a,q] \cap K$, which is a decreasing homeomorphism of $[a,q] \cap K$, and as the identity in $K\cap [a,q]^c$. The existence of such a homeomorphism is a consequence of Lemma \ref{lem:existhomeo}. In particular, the homeomorphism $h$ exchanges $a$ and $q$. It is easy to see that there is no fixed point of $h^{-1}fh$ which is a discontinuity point in $(a,q)\cap K$ since there is no fixed point of $f$ which is a discontinuity point of $f$ in $(a,q)\cap K$.

    As $ h(a)=q$,  $a$ and $ p$ are fixed and continuity points of $h^{-1}fh$.
    So the number of fixed and discontinuity points of $hfh^{-1}$ is equal to the number of fixed and discontinuity points of $f$ minus $2$. This number is still even.
    \item In the case that there exists $\delta >0$ such that  $(q-\delta,q) \cap K =\emptyset$, let $a' \in (a,q) \cap K$ such that there is no discontinuity point of $f$ in $(a',q)\cap K$ and let $q'$ be a point of $\mathrm{Disc}(f)$ such that $\mathrm{Disc}(f) \cap (q,q') \cap K= \emptyset$.
    
    \begin{enumerate}
        \item[i] In the case that $q'\neq p$,    
    we define $h:K\to K$ as an involution that exchanges the intervals in $[a,a']$ and $[q,q']$ with $h(a)=q$ and as the identity in the complement. The existence of such a map is ensured by Lemma \ref{lem:existhomeo}.
    
    Again, as $ h(a)=q$ it follows that $a$ and $ p$ are fixed and continuity points of $h^{-1}fh$.
    
    Since there is no fixed point of  discontinuity of $f$ in $(a,a']$ and $h$ is monotone on $(q,q']$ and on $(a,a']$, there is no fixed discontinuity point of $h^{-1}fh$ in $h^{-1}(a,a']=(q,q']$.
    As there is no point of  discontinuity of $f$ in $(q,q')$  and $h$ is monotone on $(q,q')$ and on $(a,a')$,  there is no fixed discontinuity point of $h^{-1}fh$ in $h^{-1}(q,q')=(a,a')$.
    
    In the case where $q'$ were a discontinuity point of $f$ that is fixed, then $ a'=h^{-1}(q')$ would be a fixed and discontinuity point of $h^{-1}fh$.
    Summarizing, the number of fixed and discontinuity points of $hfh^{-1}$ is equal to the number of fixed and discontinuity points of $f$ minus $2$ and is still even.
    \item [ii] In the case $p=q'$, we have that $K$ can be decomposed as the disjoint union of the invariant sets $K_1=[q,p]\cap K$ and $K_2=[a, a'] \cap K$. Hence item (b) holds. 
    \end{enumerate}

\end{enumerate}

By iterating the construction of items 1 and 2(i) a finite number of times and as the number of discontinuity of $f$ is even, we obtain a homeomorphism $g:K\to K$ which is conjugated to $f$ with no fixed point of discontinuity.

\end{proof}

\subsection{Normal form of a power}

In this section, we will prove the following proposition.

\begin{proposition} \label{prop:normalformpower}
Let $f$ be an element of $\mathrm{Homeo}_{<}(K)$ and suppose the following. 
\begin{enumerate}
\item No periodic point of $f$ is a discontinuity point of $f$.
\item The sequence $(d(f^{n}))_n$ is bounded.
\end{enumerate}
Then there exists $p \geq 1$ such that $f^p$ is a multihomeomorphism of the circle.
\end{proposition}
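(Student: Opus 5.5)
We plan to build the circles directly out of $K$ by gluing together the gaps that are broken by the iterates of $f$. Since no periodic point of $f$ is a discontinuity point, every discontinuity point of $f$ has an infinite orbit. By Proposition \ref{prop:discorbits}, each such orbit then falls under the second alternative, because the first alternative would make $(d(f^n))_n$ unbounded. So there is a finite set $F$ of points with infinite orbits such that, for every sufficiently large $n$,
$$\mathrm{Disc}(f^n)=F\cup f^{-n}(F).$$
For large $n$, the gap endpoints $f^{-n}(b)$ are discontinuities of $f^n$ but not of $f^{2n}$ (second part of Proposition \ref{prop:discorbits}, see also Remark \ref{rema:2.3}). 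The idea is that a point of $F$ ``carries'' a gap of $\mathbb{S}^1\setminus K$ which $f^n$ tears apart, and which is glued back after a further $n$ iterates.

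\textbf{Step 1: reorganize the gaps.} Each $b\in F$ bounds a gap $J_b$ of $\mathbb{S}^1\setminus K$. We pass to a power $f^p$ with $p$ large; after that we may assume $\mathrm{Disc}(f^{p})=F\cup f^{-p}(F)$. We then show that $\mathrm{Disc}(f^{pm})=F\cup f^{-pm}(F)$ for all $m\ge1$, by induction using Lemma \ref{lem:compositiondisc} together with the explicit description from the proof of Proposition \ref{prop:discorbits}. Set $g=f^p$. The points of $F$ come in pairs $(b^-,b^+)$, the two endpoints of the gaps $J_b$. Along the forward $g$-orbit of such a gap, the two endpoints $g^m(b^\pm)$ eventually stop being adjacent and later behave continuously again. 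In the other direction, the backward iterates $g^{-m}(F)$ are points whose images under $g^{m}$ land in $F$. Morally, each point of $F$ corresponds to a ``cut'' of $K$ that $g$ transports. Because the discontinuity set of $g^{m}$ does not grow with $m$, these cuts must pair up consistently: $g$ maps the right side of a cut at $c$ continuously to the right side of the cut at $g(c)$, except at finitely many cuts.

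\textbf{Step 2: define the new circles.} We build an abstract one-dimensional space $X$ as follows. Start from the circle with $K$ inside. Collapse all gaps of $\mathbb{S}^1\setminus K$ except those adjacent to points of the $g$-orbits of $F$. Then cut and reglue along the finitely many orbits of cuts: for each pair of points $(x,y)$ in the grand orbit of $F$ with $g(x^+)$ adjacent to $g(y^-)$, we declare $x$ and $y$ to be neighbours. The claim is that this regluing is compatible with every iterate of $g$, precisely because $\mathrm{Disc}(g^{m})\subset F\cup g^{-m}(F)$ is uniformly finite. The resulting space $X$ is compact and one-dimensional, and every point has a neighbourhood ordered like an interval. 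Hence $X$ is a finite disjoint union of circles $\mathbb{S}^1_n$. By construction, $g$ extends to a homeomorphism of $X$, and the map $h:K\to X$ is locally monotone and a homeomorphism onto its image. Lemma \ref{lem:existhomeo} can be used to realize the regluing concretely, by an element of $\mathrm{Homeo}_<(K)$ without discontinuities that rearranges clopen intervals.

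\textbf{Main obstacle.} The hardest part will be Step 2: proving that the regluing is well defined, finite, and independent of $m$. Concretely, one must check that the ``new adjacency'' relation defined via $g^m$ stabilizes as $m\to\infty$. First we will try to show that each point of $F$ is paired with exactly one other point of the grand orbit of $F$ under this relation. The tool is the exact formula $\mathrm{Disc}(g^{m})=F\cup g^{-m}(F)$ combined with point 4 of Lemma \ref{lem:compositiondisc}: a new adjacency appearing for $g^{m}$ but not for $g^{m'}$ would produce a discontinuity outside $F\cup g^{-m}(F)$. The purpose of taking the power $p$ is to kill the finitely many combinatorial ambiguities, for instance which side of a cut goes where and orientation flips. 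This is why the statement only asserts that some power $f^p$ is a multihomeomorphism of the circle.
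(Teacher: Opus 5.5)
Your route differs from the paper's. The paper passes to a power $g=f^p$ with the pair property and repeatedly conjugates $g$ inside $\mathrm{Homeo}_<(K)$ (Lemma \ref{lem:conjdisc}), each time strictly lowering the number of discontinuities, until the involution $r$ satisfies $r(2i+2)=2i+1$. Then $g$ permutes the intervals $I_{2i+1}$, and closing each one into its own circle removes all discontinuities. A direct regluing as you suggest can also work, but your Step 2 has a genuine gap, and the one concrete rule you give is wrong.

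Let $M_m$ be the pairing of gap endpoints defined by ``$g^m(x)$ and $g^m(y)$ bound a gap of $\mathbb{S}^1\setminus K$''. Then $M_0$ is the original gap structure, and $M_m$ differs from $M_0$ exactly at the points of $\mathrm{Disc}(g^m)$. Your rule (declare $x,y$ neighbours when $g(x)$ and $g(y)$ are adjacent) is $M_1$, which is modified on $F\cup g^{-1}(F)$. Its pullback by $g$ is $M_2$, which is modified on $F\cup g^{-2}(F)$. These differ by the second part of Proposition \ref{prop:discorbits}, so $M_1$ is not $g$-invariant, and uniform finiteness of $\mathrm{Disc}(g^m)$ does not repair this. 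There are three further problems:
\begin{itemize}
\item Collapsing gaps identifies their two endpoints, so $h$ would not be injective.
\item Only the finitely many gaps bounded by points of $F$ should be touched, not the whole grand orbit.
\item Lemma \ref{lem:existhomeo} only produces maps without discontinuities, so it cannot realize any regluing.
\end{itemize}

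What is missing is exactly the argument you postpone as the ``main obstacle'', and it does not come from point 4 of Lemma \ref{lem:compositiondisc}.
\begin{itemize}
\item \emph{Stabilization.} If $y_m$ is the $M_m$-partner of a gap endpoint $x$, then $y_{m+1}=y_m$ as soon as $g^m(x)\notin\mathrm{Disc}(g)$. No periodic orbit meets $\mathrm{Disc}(g)$ (this is where hypothesis 1 is used), so every forward orbit eventually avoids this finite set. Hence the partners stabilize and define an involution $M_\infty$.
\item \emph{Finiteness.} For large $m$ we have $\mathrm{Disc}(g^m)=F\cup g^{-m}(F)$, and each point lies in $g^{-m}(F)$ for only finitely many $m$. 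So $M_\infty$ differs from $M_0$ exactly on the finite set $F$, and therefore pairs points of $F$ among themselves.
\item \emph{Invariance.} The identity $g^{-1}(M_m)=M_{m+1}$ gives $g$-invariance of $M_\infty$.
\end{itemize}
Only then can you cut $K$ at the gaps bounded by points of $F$, which gives finitely many clopen intervals. Glue their endpoints according to $M_\infty$, reversing intervals where necessary; this is allowed because $h$ need only be locally monotone. The result is finitely many circles on which $hgh^{-1}$ has no discontinuity. Done this way, no power of $f$ is needed at all, so your explanation of the role of $p$ (resolving side and orientation ambiguities) is also inaccurate.
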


Observe that, in Proposition \ref{prop:removingperiodic}, in the first case, both $g_{|K_1}$ and $g_{|K_2}$ are conjugate to homeomorphisms which do not possess a discontinuity point which is periodic. Hence the combination of Proposition \ref{prop:removingperiodic} and Proposition \ref{prop:normalformpower} implies that, for any homeomorphism $f \in \mathrm{Homeo}_<(K)$ such that the sequence $(d(f^n))$ is bounded, the homeomorphism $f$ admits a power which is conjugate to a multihomeomorphism of the circle.

To carry out the proof, the following definition taken from \cite{GL} will be convenient (see Definition 5.1 in \cite{GL}).

\begin{definition}
An element $g$ of $\mathrm{Homeo}_{<}(K)$ is said to satisfy the pair property if there exist points $\alpha_1, \alpha_2,\ldots, \alpha_{l}$ of $K$, with infinite orbits under $g$ such that 
\begin{enumerate}
\item $\mathrm{Disc}(g)=\left\{ \alpha_i\ | \ 1 \leq i \leq l \right\} \cup \left\{ g(\alpha_i)\ | \ 1 \leq i \leq l \right\}$.
\item The points $\alpha_i$ are not discontinuity points of $g^2$. 
\end{enumerate}
\end{definition}

Observe that, in this case, the points $g(\alpha_i)$ are discontinuity points of $g^2$.

Fix a homeomorphism $f$ in $\mathrm{Homeo}_{<}(K)$ which satisfies the hypothesis of Proposition \ref{prop:normalformpower}. By Proposition \ref{prop:discorbits}, there exists $p \geq 1$ such that $g=f^{p}$ satisfies the pair property. Set
$$\mathrm{Disc}(g)=\left\{ \alpha_i\ | \ 1 \leq i \leq k' \right\} \cup \left\{ g(\alpha_i)\ | \ 1 \leq i \leq k' \right\}$$
and, for any $1 \leq i \leq k'$, let $\omega_i=g(\alpha_i)$.

\begin{lemma} \label{lem:alphasstaytogether}
For any connected component $I$ of $\mathbb{S}^1 \setminus K$, if one extremity of $I$ is some $\alpha_i$ (respectively some $\omega_i$) then the other extremity of $I$ is some $\alpha_j$, with $j \neq i$ (respt. some $\omega_j$, with $j \neq i$). 
\end{lemma}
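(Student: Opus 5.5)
The proof works directly from the definition of a discontinuity point. The key observation is that this definition is symmetric in the two endpoints of a gap. A point $a \in K$ is a discontinuity of a map $\varphi \in \mathrm{Homeo}_<(K)$ exactly when $a$ bounds a connected component $I$ of $\mathbb{S}^1 \setminus K$ such that $\varphi(\min I)$ and $\varphi(\max I)$ do not bound a component of $\mathbb{S}^1\setminus K$. This condition depends only on $I$, not on which endpoint $a$ is. So for every $\varphi$ and every gap $I$, either both endpoints of $I$ are discontinuity points of $\varphi$, or neither is. I will apply this with $\varphi=g$ and with $\varphi=g^2$.

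First step. Let $I$ be a gap with one extremity equal to $\alpha_i$, and let $b$ be its other extremity; since $I$ is a nonempty open interval, $b \neq \alpha_i$. Because $\alpha_i \in \mathrm{Disc}(g)$, the symmetry gives $b\in \mathrm{Disc}(g)$. By the pair property, $b$ is then either some $\alpha_j$ or some $\omega_j=g(\alpha_j)$. If $b=\alpha_j$, then $j\neq i$ because $b\neq\alpha_i$, and we are done.

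Second step: ruling out $b=\omega_j$. By the second condition of the pair property, $\alpha_i$ is a continuity point of $g^2$. By the symmetry applied to $g^2$, the other endpoint $b$ of the same gap is also a continuity point of $g^2$. However, as observed right after the definition of the pair property, each $\omega_j=g(\alpha_j)$ is a discontinuity point of $g^2$. Hence $b$ cannot be an $\omega_j$.

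The statement for $\omega_i$ follows by the same two steps. The other endpoint $b$ of the gap lies in $\mathrm{Disc}(g)$, so it is an $\alpha_j$ or an $\omega_j$. Since $\omega_i\in\mathrm{Disc}(g^2)$, the symmetry for $g^2$ gives $b\in \mathrm{Disc}(g^2)$, which excludes $b=\alpha_j$ because the $\alpha_j$ are continuity points of $g^2$. Finally $b\neq\omega_i$, so $b=\omega_j$ with $j\neq i$.

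The only delicate point is the fact that the $\omega_j$ are discontinuities of $g^2$, which the paper asserts after the definition. If I had to justify it, I would use points 2 and 3 of Lemma \ref{lem:compositiondisc} together with the description of $\mathrm{Disc}(g^n)$ on infinite orbits in Proposition \ref{prop:discorbits}. In the second alternative of that proposition, the $b$'s are discontinuities of $g^{2}$ while their $g^{-1}$-translates are not, which identifies $\{\omega_j\}$ with the persistent points $b_1,\dots,b_k$.
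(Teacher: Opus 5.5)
Your proof is correct and is essentially the paper's argument: both endpoints of a gap share their continuity status for $g$ and for $g^2$, so the partner of $\alpha_i$ lies in $\mathrm{Disc}(g)\setminus\mathrm{Disc}(g^2)$ and must be an $\alpha_j$, and symmetrically for $\omega_i$. Your side justification that the $\omega_j$ lie in $\mathrm{Disc}(g^2)$, obtained by identifying them (for $g=f^p$) with the persistent points $b_1,\dots,b_k$ of Proposition~\ref{prop:discorbits}, is also exactly where that fact comes from in the paper's setting.
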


\begin{proof}
Take a connected component $I$ of $\mathbb{S}^1 \setminus K$ such that one extremity of $I$ is some $\alpha_i$ and denote by $\beta$ the other extremity of $I$, which has to be a discontinuity point of $g$. As $\alpha_i$ is not a discontinuity point of $g^2$, the point $\beta$ cannot be a discontinuity point of $g^2$. Hence the point $\beta$ cannot be one of the $\omega_j$s and has to be some $\alpha_j$. The proof for the points $\omega_i$ is similar, using that those points are discontinuity points for $g^2$.
\end{proof}

By Lemma \ref{lem:alphasstaytogether}, $k'=2k$, with $k \geq 0$. Moreover, renumbering the discontinuity points if necessary we can suppose that, if we follow the circle starting from the point $\alpha_1$ in the direction given by its orientation, we encounter the points $\alpha_1$, $\alpha_2, \ldots, \alpha_{2k}$ in this order and that the intervals $(\alpha_{2i+1}, \alpha_{2i+2})$, for $i=0,..., k-1$, are connected components of the complement of $\mathbb{S}^1 \setminus K$.

We now define an involution $r$ of $\left\{ 1, \ldots, 2k \right\}$ which will help us determine discontinuity points which can be removed by conjugating $g$. 

This map is defined in the following way. Take an index $i$ and look at the point $\omega_i$. As this point is a discontinuity point of $g$ either it is accumulated only on the right by points of $K$ or it is accumulated only on the left by points of $K$. If it is accumulated on the right by points of $K$, look at the first point among the $\omega_j$, $j \neq i$, on the right of $\omega_i$ and denote $r(i)$ the integer such that this point is $\omega_{r(i)}$. Observe that the point $\omega_{r(i)}$ is accumulated on the left by points of $K$, by Lemma \ref{lem:alphasstaytogether}. The definition of $r(i)$ in the case where the point $\omega_i$ is accumulated on the left by points of $K$ is made by exchanging the words "left" and "right" in the preceding definition. Denote by $I_i$ the interval of $K$ with endpoints $\omega_i$ and $\omega_{r(i)}$. It does not contain any of the points $\omega_j$, with $j \neq i, r(i)$. Note that for any $i$, $I_i=I_{r(i)}$.

\begin{lemma} \label{lem:conjdisc}
Suppose that, for some $0 \leq i\leq k-1$, $r(2i+2) \neq 2i+1$. Then there exists a homeomorphism $h\in \mathrm{Homeo}_{<}(K)$ such that 
\begin{enumerate}
    \item the homeomorphism $hgh^{-1}$ satisfies the pair property,
    \item $$\mathrm{Disc}(hgh^{-1}) \subset h \left(\mathrm{Disc}(g)\right)\setminus h \left(\left\{\alpha_{2i+1},\alpha_{2i+2},\omega_{2i+1},\omega_{2i+2} \right\} \right).$$
\end{enumerate}
\end{lemma}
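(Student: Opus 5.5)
The idea is to build $h$ as a ``cut and reglue'' of $K$ near the points $\omega_{2i+1},\omega_{2i+2}$, so that the jump which $g$ creates across the gap $(\alpha_{2i+1},\alpha_{2i+2})$ is undone by $h$. The two points $\alpha_{2i+1},\alpha_{2i+2}$ bound a common gap of $\mathbb{S}^1\setminus K$. Small clopen intervals $A_1,A_2$ of $K$ adjacent to this gap, on each side, are sent by $g$ onto small clopen intervals $W_1=g(A_1)$ and $W_2=g(A_2)$. These have endpoints $\omega_{2i+1},\omega_{2i+2}$ and are not adjacent across a gap, since these points are discontinuities of $g$.

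I would use Lemma \ref{lem:existhomeo}, applied to a suitable finite family of pairwise disjoint clopen intervals, to construct $h\in\mathrm{Homeo}_<(K)$ with the following properties:
\begin{enumerate}
\item $h$ is the identity on a neighbourhood of all the $\alpha_j$ and of all the $\omega_j$ with $j\neq 2i+1,2i+2$;
\item $h$ rearranges the pieces of $I_{2i+1}\cup I_{2i+2}$ (and, if needed, $I_{r(2i+1)}$, $I_{r(2i+2)}$) so that $h(W_1)$ and $h(W_2)$ become adjacent across one gap of $\mathbb{S}^1\setminus K$, with orientations matching the local orientation of $g$ on $A_1,A_2$.
\end{enumerate}
Since the orbits of the $\alpha_j$ are infinite and no periodic point is a discontinuity, the $\omega$'s and $\alpha$'s are distinct, so such supports exist.

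Next I check the discontinuities of $g'=hgh^{-1}$ using Lemma \ref{lem:compositiondisc}. Near $\alpha_{2i+1},\alpha_{2i+2}$ we have $h^{-1}=\mathrm{id}$, so $g'=hg$ there. Then $hg$ maps $A_1\cup A_2$ monotonically onto two adjacent pieces, and $\alpha_{2i+1},\alpha_{2i+2}$ become continuity points. Near $h(\omega_{2i+1}),h(\omega_{2i+2})$, the map $g'=hgh^{-1}$ composes the cut introduced by $h^{-1}$ with the discontinuity of $g$ at $\omega_{2i+1},\omega_{2i+2}$. I would choose the rearrangement so that the side of $\omega_{2i+1}$ not accumulated by $K$ is glued by $h^{-1}$ exactly to the correct neighbour, so these two discontinuities cancel. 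Away from the supports, points 1--4 of Lemma \ref{lem:compositiondisc} show that any discontinuity of $g'$ is the image by $h$ of a discontinuity of $g$, namely one of the remaining $\alpha_j,\omega_j$ or $\omega_{r(2i+1)},\omega_{r(2i+2)}$. This gives item 2.

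For item 1, the pair property, the remaining discontinuities still come in pairs $h(\alpha_j)$, $h(\omega_j)=g'(h(\alpha_j))$. I would check that $h(\alpha_j)$ is not a discontinuity of $g'^2=hg^2h^{-1}$. This follows from Lemma \ref{lem:compositiondisc} because $h$ is continuous near $\alpha_j$ and near $g^2(\alpha_j)$: the latter lies in the orbit, but away from the finitely many cut points if the supports are chosen small.

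The main obstacle is the combinatorics of the regluing. The hypothesis $r(2i+2)\neq 2i+1$ is exactly what allows it. If $\omega_{2i+1}$ and $\omega_{2i+2}$ were the two endpoints of the same interval $I_{2i+1}=I_{2i+2}$, then making $W_1,W_2$ adjacent would be incompatible with cyclic order or orientation. When they bound different intervals, we can detach $I_{2i+1}$ and $I_{2i+2}$ (cutting at $\omega_{r(\cdot)}$) and swap or flip one of them next to the other. I would first treat the case where the accumulation sides of $\omega_{2i+1},\omega_{2i+2}$ are compatible with the orientation of $g$ on $A_1,A_2$, using a pure translation of one interval. The orientation-reversing case would be handled with a flip, as in the proof of Proposition \ref{prop:removingperiodic}. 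In both cases I would then verify that the new cuts at the $\omega_{r(\cdot)}$ ends are absorbed in $h(\mathrm{Disc}(g))$.
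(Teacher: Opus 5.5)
There is a genuine gap: the homeomorphism $h$ you specify cannot exist. Your item 1 requires $h$ to be the identity near every $\alpha_p$ and near every $\omega_p$ with $p\neq 2i+1,2i+2$. This is incompatible with conclusion 2.

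Call the gap partner of a point of $K$ the other endpoint of the component of $\mathbb{S}^1\setminus K$ it bounds. Let $\omega_j$ ($j\neq 2i+2$) be the gap partner of $\omega_{2i+1}$.
\begin{enumerate}
\item To remove $\alpha_{2i+1}$, $h(\omega_{2i+1})$ and $h(\omega_{2i+2})$ must be gap partners. Since $h$ fixes the other $\omega$'s and, by Lemma \ref{lem:alphasstaytogether}, the gap partner of an $\omega$ is an $\omega$, these two points are not $\omega$'s.
\item Hence $u=h^{-1}(\omega_{2i+1})$ is neither an $\alpha$ nor an $\omega$, so $u\notin\mathrm{Disc}(g)$. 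Conclusion 2 then forces $hgh^{-1}$ to be continuous at $h(u)=\omega_{2i+1}$, whose gap partner is $\omega_j=h(\omega_j)$. Thus $hg(u)$ and $hg(\omega_j)$ are gap partners.
\item The map $g$ is continuous at $u$, and the gap partner of $u$ is not $\omega_j$ (that of $\omega_j$ is $\omega_{2i+1}\neq u$). So this forces $g(u)\in\mathrm{Disc}(h)$.
\item Since $h$ is continuous at every $\alpha$, $g(u)$ is not an $\alpha$; it is not an $\omega$ because $u$ is not an $\alpha$. So $g(u)\notin\mathrm{Disc}(g)$, and the argument repeats: $g^k(u)\in\mathrm{Disc}(h)$ for all $k\geq 1$.
\item Finiteness of $\mathrm{Disc}(h)$ makes $u$ periodic, of period $q$ say. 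Then $hg^q(\omega_j)$ must be the gap partner $\omega_j=h(\omega_j)$ of $h(u)$, so $g^q(\omega_j)=\omega_j$. This contradicts the infinite orbit of $\omega_j=g(\alpha_j)$.
\end{enumerate}
The faulty step is ``away from the supports \ldots any discontinuity of $g'$ is the image by $h$ of a discontinuity of $g$''. By Lemma \ref{lem:compositiondisc}, the danger lies precisely at the cut points of $h$, which contribute $\mathrm{Disc}(h^{-1})\cup hg^{-1}(\mathrm{Disc}(h))$. A cut-and-reglue localized near $\omega_{2i+1},\omega_{2i+2}$ cuts $K$ at points outside $\mathrm{Disc}(g)$.

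The missing idea is to let $h$ cut $K$ only at gaps bounded by $\omega$'s, so that $\mathrm{Disc}(h)\subset\mathrm{Disc}(g)\cap g(\mathrm{Disc}(g))$. Lemma \ref{lem:compositiondisc} then gives $\mathrm{Disc}(hgh^{-1})\subset\mathrm{Disc}(h^{-1})\cup h(\mathrm{Disc}(g))\cup hg^{-1}(\mathrm{Disc}(h))\subset h(\mathrm{Disc}(g))$ at once.

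The paper does this by moving whole intervals $I_\ell$:
\begin{itemize}
\item if $j\neq r(2i+2)$, it exchanges $I_{2i+2}$ and $I_j$, sending $\omega_{2i+2}$ to $\omega_j$;
\item if $j=r(2i+2)$, it flips $I_{2i+2}$.
\end{itemize}
Then $hgh^{-1}$ sends the gap partners $h(\alpha_{2i+1}),h(\alpha_{2i+2})$ to the gap partners $\omega_{2i+1},\omega_j$. It sends $\omega_{2i+1}=h(\omega_{2i+1})$ and $\omega_j=h(\omega_{2i+2})$ to $hg(\omega_{2i+1})$ and $hg(\omega_{2i+2})$. These are gap partners because $\alpha_{2i+1}$ is a continuity point of $g^2$, and $h$ is continuous at $g(\omega_{2i+1}),g(\omega_{2i+2})$ (they are not $\omega$'s, since no $\alpha$ is an $\omega$).

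This $h$ moves $\omega_j$, which is exactly what your item 1 forbids. It also involves the interval $I_j$ across the gap from $\omega_{2i+1}$, which your plan never identifies; your $I_{r(2i+1)}$ and $I_{r(2i+2)}$ are just $I_{2i+1}$ and $I_{2i+2}$. Your closing remark that cuts at $\omega$-endpoints are absorbed into $h(\mathrm{Disc}(g))$ is the right instinct, but it has to be the whole construction rather than a side effect.
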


 Observe that, in particular, the number of discontinuity points of $hgh^{-1}$ is strictly smaller than the number of discontinuity points of $g$. Hence, after repeated use of this lemma, we can suppose that, for any $i$, $r(2i+2)=2i+1$.
 
 \begin{proof}
 Fix an index $i$ as in the hypothesis of the lemma and take an index $j$ such that $\omega_{2i+1}$ and $\omega_j$ bound a connected component of $\mathbb{S}^1 \setminus K$. 
 As the point $\alpha_{2i+1}$ is a discontinuity point for $g$, necessarily, $j \neq 2i+2$. We then distinguish two cases, depending on whether $j=r(2i+2)$ or not. 

Suppose first that $j \neq r(2i+2)$. Then the intervals $I_{2i+1}$, $I_{2i+2}$ and $I_{j}$ of the Cantor set $K$ are pairwise disjoint as $r(2i+2)\neq 2i+1 $. Consider then an element $h$ of $\mathrm{Homeo}_<(K)$ which exchanges the intervals $I_{2i+2}$ and $I_j$, exchanging the points $\omega_{2i+2}$ and $\omega_j$ with no discontinuity point in the interiors of those intervals, and which fixes all the other points of $K$. Observe that
$$\mathrm{Disc}(h)=\left\{ \omega_{2i+2},\omega_{r(2i+2)},\omega_{j},\omega_{r(j)} \right\}.$$
By Lemma \ref{lem:compositiondisc},
$$\begin{array}{rcl}
\mathrm{Disc}(hgh^{-1}) & \subset & \mathrm{Disc}(h^{-1}) \cup h(\mathrm{Disc}(g)) \cup hg^{-1}(\mathrm{Disc}(h)) \\
& \subset & h(\mathrm{Disc}(g))
\end{array}$$

as $\mathrm{Disc}(h)\subset \mathrm{Disc}(g)\cap g(\mathrm{Disc}(g))$ and $\mathrm{Disc}(h^{-1})=h(\mathrm{Disc}(h))=\mathrm{Disc}(h)$. Moreover, observe that $hgh^{-1}$ maps the points $h(\alpha_{2i+1})$ and $h(\alpha_{2i+2})$ respectively to the points $\omega_{2i+1}=h(\omega_{2i+1})$ and $\omega_j=h(\omega_{2i+2})$ which bound a connected component of the complement of $K$. Hence the points $h(\alpha_{2i+1})$ and $h(\alpha_{2i+2})$ are not discontinuity points of $hgh^{-1}$. Moreover, as $\alpha_{2i+1}$ is not a discontinuity point of $g^{2}$, the points $g(\omega_{2i+1})$ and $g(\omega_{2i+2})$ bound a connected component of the complement of $K$. Hence the points $\omega_{2i+1}=h(\omega_{2i+1})$ and $\omega_j=h(\omega_{2i+2})$ are not discontinuity points of $hgh^{-1}$, which completes the proof in this first case.

Suppose now that $j = r(2i+2)$. In that case, take for $h$ a homeomorphism in $\mathrm{Homeo}_<(K)$ that preserves $I_j$, has no discontinuity point in the interior of $I_j$ and that exchanges the points $\omega_j$ and $\omega_{2i+2}$. As in the first case, the homeomorphism $hfh^{-1}$ then satisfies the conclusions of Lemma \ref{lem:conjdisc}.
\end{proof}

The following lemma allows to conclude the proof of Proposition \ref{prop:normalformpower}.

\begin{lemma}
Suppose that, for any $0 \leq i \leq k-1$, $r(2i+2)=2i+1$. Then the homeomorphism $g$ is a multihomeomorphism of the circle.
\end{lemma}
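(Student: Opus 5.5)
The plan is to build the circles directly by gluing gaps of $K$. The discontinuities of $g$ come in gap pairs. Each gap $(\alpha_{2i+1},\alpha_{2i+2})$ of $\mathbb{S}^1\setminus K$ is sent by $g$ to the pair $\{\omega_{2i+1},\omega_{2i+2}\}$. The hypothesis $r(2i+2)=2i+1$ says that these two points bound the interval $I_{2i+1}$ of $K$, which contains no other $\omega_j$. The pair property says $\alpha_{2i+1}$ and $\alpha_{2i+2}$ are continuity points of $g^2$. Hence $g(\omega_{2i+1})$ and $g(\omega_{2i+2})$ bound a gap of $K$, so the $\omega$'s are not discontinuities of $g^{-1}$ in the "gap" sense.

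Step one is to cut the circle at each gap $(\alpha_{2i+1},\alpha_{2i+2})$. This splits $\mathbb{S}^1$ into $k$ arcs $J_1,\dots,J_k$, each an interval whose intersection with $K$ is a clopen interval of $K$. For $i=0,\dots,k-1$, the arc running from $\alpha_{2i+2}$ to $\alpha_{2i+3}$ (indices mod $2k$) has both endpoints in $K$.

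Step two is to define a new space by identifying endpoints. Each arc $J\cap K$ is mapped by $g$ monotonically, since it contains no discontinuity in its interior. It is sent onto an interval of $K$ with endpoints $\omega_{2i+2}$ and $\omega_{2i+3}$ (or reversed), up to the gap structure. We form the quotient $X$ of the disjoint union of the arcs $J_i$ by gluing, for each $i$, the endpoint $\alpha_{2i+1}$ of one arc to the endpoint $\alpha_{2i+2}$ of the next. Since each $\alpha$ is glued to exactly one other point, $X$ is a compact 1-manifold without boundary, that is, a disjoint union $\mathbb{S}^1_n$ of circles. We let $h:K\to X$ be the induced map. Since the gluing merely closes up the gaps $(\alpha_{2i+1},\alpha_{2i+2})$, $h$ is continuous, injective on $K$ (the points $\alpha_{2i+1}$ and $\alpha_{2i+2}$ stay distinct, as the gap is collapsed only to a small arc, not a point) and locally monotone. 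Concretely, we keep a small open arc in place of each gap, so that $h$ is a homeomorphism onto its image.

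Step three is to extend $hgh^{-1}$ to a homeomorphism of $X$. On the interior of each arc $J$, $g$ has no discontinuity, so by the remark after the definition of discontinuity it extends to a homeomorphism from $J$ onto an arc of $\mathbb{S}^1$. It remains to check gluing compatibility at the glued gaps. The gap $(\alpha_{2i+1},\alpha_{2i+2})$ in $X$ must be sent to a gap of $h(K)$ in $X$ bounded by $h(\omega_{2i+1})$ and $h(\omega_{2i+2})$. These points are the endpoints of $I_{2i+1}$, which contains no other $\omega_j$. I then need to show that $\omega_{2i+1}$ and $\omega_{2i+2}$ become adjacent in $X$, i.e. that the complementary interval between them in $K$ lies entirely inside what $g$ sends from neighbourhoods of $\alpha_{2i+1},\alpha_{2i+2}$. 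This is where I would use that $g$ is a bijection with $\mathrm{Disc}(g^{-1})=g(\mathrm{Disc}(g))$ (Lemma \ref{lem:compositiondisc}, point 5). The images of the arcs tile $K$, and the image pieces meet exactly at the $\omega$'s. Since $I_{2i+1}$ is bounded by $\omega_{2i+1}$ and $\omega_{2i+2}$ and contains no other $\omega$, the side of $\omega_{2i+1}$ accumulated by $K$ faces $\omega_{2i+2}$. So the images of the two arcs adjacent to the glued gap approach the pair from matching sides, and the local pictures fit into a homeomorphism of a neighbourhood of the glued gap onto a neighbourhood of the corresponding gap in $X$. The extension then sends each closed glued gap arc to a gap arc, and we obtain a homeomorphism of $X$ restricting to $hgh^{-1}$.

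The main obstacle is the compatibility step: the bookkeeping of left and right accumulation at the $\omega$'s, including the orientation reversals allowed by local monotonicity. It is there that the condition $r(2i+2)=2i+1$ is essential. I would handle it by checking the four orientation cases locally at each pair $\{\omega_{2i+1},\omega_{2i+2}\}$.
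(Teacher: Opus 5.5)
There is a genuine gap, and it appears already in Steps one and two.

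First, $g$ is not monotone on your arcs $J\cap K$. The set $\mathrm{Disc}(g)$ contains the $\omega_j$ as well as the $\alpha_j$. The $\omega_j$ are never $\alpha$'s, since they are discontinuities of $g^2$ and the $\alpha$'s are not. So each $\omega_j$, together with its gap partner, lies inside some arc $J$, and $g$ cannot map $J\cap K$ monotonically onto an interval of $K$.

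Second, and decisively, your gluing only reglues the gaps you cut. The arc ending at $\alpha_{2i+1}$ is attached to the arc starting at $\alpha_{2i+2}$, exactly as they sit in $\mathbb{S}^1$. So $X$ is a single circle with the original cyclic order. Your $h$, which keeps a small arc for every gap, preserves gap-adjacency. Hence $\mathrm{Disc}(hgh^{-1})=h(\mathrm{Disc}(g))\neq\emptyset$, and $hgh^{-1}$ cannot extend to a homeomorphism of $X$.

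The failure is visible in your Step three. There you need $h(\omega_{2i+1})$ and $h(\omega_{2i+2})$ to bound a gap of $h(K)$. In your $X$ this happens only if $\omega_{2i+1},\omega_{2i+2}$ bound a gap of $\mathbb{S}^1\setminus K$. That would make $g(\alpha_{2i+1}),g(\alpha_{2i+2})$ adjacent, so $\alpha_{2i+1}$ would be a continuity point of $g$. No case analysis on orientations can repair this.

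The missing idea is to cut at the other gaps and glue different endpoints. The hypothesis $r(2i+2)=2i+1$ means that $K$ is the disjoint union of the $k$ intervals $I_{2i+1}$, with endpoints $\omega_{2i+1},\omega_{2i+2}$; the cuts are the gaps between $\omega$'s. The paper embeds each $I_{2i+1}$ monotonically into its own circle, so that $\omega_{2i+1}$ and $\omega_{2i+2}$ become adjacent across a new gap. The target is $\mathbb{S}^1_k$, not one circle.

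Two facts then have to be proved, and neither appears in your proposal.
\begin{enumerate}
\item[(i)] Each $I_{2i+1}$ contains exactly one gap $(\alpha_{2j+1},\alpha_{2j+2})$. There is at least one: otherwise $g$ is monotone on $I_{2i+1}$. Since $\alpha_{2i+1}$ is a continuity point of $g^2$, the points $g(\omega_{2i+1}),g(\omega_{2i+2})$ bound a gap, which forces $g(I_{2i+1})=K$. Then $I_{2i+1}=K$ with $\omega_{2i+1},\omega_{2i+2}$ adjacent, contradicting $\alpha_{2i+1}\in\mathrm{Disc}(g)$. There is exactly one by counting $k$ such gaps among $k$ intervals.
\item[(ii)] Write $\alpha_{2\sigma(i)+1},\alpha_{2\sigma(i)+2}$ for the $\alpha$'s in $I_{2i+1}$. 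Then $g(I_{2i+1})=I_{2\sigma(i)+1}$. The $\alpha$-gap inside $I_{2i+1}$ goes to the new gap between $\omega_{2\sigma(i)+1}$ and $\omega_{2\sigma(i)+2}$. The new gap of $I_{2i+1}$ goes to the genuine gap between $g(\omega_{2i+1})$ and $g(\omega_{2i+2})$.
\end{enumerate}
With this embedding, neither the $\alpha$'s nor the $\omega$'s remain discontinuities of $hgh^{-1}$.
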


\begin{proof}
Observe that, as each point $\alpha_{2i+1}$ is a continuity point of $g^2$, the points $g(\omega_{2i+1})$ and $g(\omega_{2i+2})$ bound a connected component of $\mathbb{S}^1 \setminus K$. If there was no discontinuity point of $g$ in $I_{2i+1}$ different from $\omega_{2i+1}$ and $\omega_{2i+2}$, then we would have $g(I_{2i+1})=K$, which means that $I_{2i+1}=K$ and that $\omega_{2i+1}$ and $\omega_{2i+2}$ bound a connected component of $\mathbb{S}^1 \setminus K$, a contradiction as $\alpha_{2i+1}$ is a discontinuity point of $g$. 

Hence, each interval $I_{2i+1}$ has to contain at least a pair of points $\alpha_{2j+1}$, $\alpha_{2j+2}$, with $j \leq k-1$. As there are exactly $k$ intervals $I_i$, each interval $I_i$ contains exactly two points $\alpha_{2\sigma(i)+1}$ and $\alpha_{2\sigma(i)+2}$, where $\sigma$ is a permutation of $\left\{0,1, \ldots,k-1 \right\}$.

As $g(\alpha_{2\sigma(i)+1})=\omega_{2\sigma(i)+1}$, $g(\alpha_{2\sigma(i)+2})=\omega_{2\sigma(i)+2}$, $g$ has no discontinuity point in $(\min(I_{2i+1}),\alpha_{2\sigma(i)+1})$ and $(\alpha_{2\sigma(i)+2},\max(I_{2i+1}))$ and the points $g(\omega_{2i+1})$ and $g(\omega_{2i+2})$ bound a connected component of $\mathbb{S}^1 \setminus K$, the homeomorphism $g$ sends the interval $I_{2i+1}$ onto the interval $I_{2\sigma(i)+1}$.

Let $h: K \rightarrow \mathbb{S}^1_k$ be a continuous embedding which sends monotonically, for each $0 \leq i \leq k-1$, the interval $I_{2i+1}$ to the $(i+1)$th circle $S_i$ of the disjoint union of circles $\mathbb{S}^1_k$. Observe that $h(\omega_{2i+1})$ and $h(\omega_{2i+2})$ bound a connected component of the complement of $S_i \setminus h(I_{2i+1})$ so that the homeomorphism $hgh^{-1}$ of the Cantor subset $h(K)$ does not have any discontinuity point.
\end{proof}

\subsection{Normal form}

To prove the implication $2. \Rightarrow 3.$ in Theorem \ref{thm:equivalence}, it remains to prove the following proposition.

\begin{proposition} \label{prop:takingroots}
Let $f$ be a homeomorphism in $\mathrm{Homeo}_<(K)$. Suppose that there exists $p \geq 1$ such that $f^{p}$ is a multihomeomorphism of the circle. Then $f$ is a multihomeomorphism of the circle.
\end{proposition}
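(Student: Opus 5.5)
We will build, from the circle structure adapted to $f^{p}$, a finer circle structure that is $f$-invariant. Let $h:K\rightarrow \mathbb{S}^1_n$ be an embedding as in Definition \ref{def:multihomeomorphism} for $g=f^p$. Pulling back the components of $\mathbb{S}^1_n$ gives a partition of $K$ into clopen sets $C_1,\ldots,C_n$. Each $C_j$ carries a cyclic order induced from its circle. The key point is that, in this cyclic order, every point of $C_j$ is a continuity point of $g$: for $x\in C_j$ there is a ``successor gap'' (either a gap of $K$ inside the interval structure, or the artificial gap created by closing $C_j$ into a circle), and $g$ maps gaps of $C_j$ to gaps of $g(C_j)$.

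The natural invariant notion is the following. Call a pair $\{x,y\}$ of points of $K$ \emph{$g$-adjacent} if $h(x)$ and $h(y)$ bound a connected component of $\mathbb{S}^1_n\setminus h(K)$. The set of $g$-adjacent pairs is $g$-invariant. We want an analogous set of pairs which is invariant under $f$. Our candidate is the set $\mathcal{A}$ of pairs $\{x,y\}$ such that $x$ and $y$ are extremities of components of $\mathbb{S}^1\setminus K$ and $\{f^m(x),f^m(y)\}$ bound a component of $\mathbb{S}^1\setminus K$ for all $m$ large. Equivalently, $\mathcal{A}$ consists of the pairs that are eventually adjacent in $K\subset \mathbb{S}^1$ along the $f$-orbit.

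Since $d(f^m)$ is bounded (it is bounded for $f^p$ because $h f^{p}h^{-1}$ has no discontinuity, and $d(f^{m})\le \sum d(f^{r})$ for $m=qp+r$ by Lemma \ref{lem:compositiondisc}), only finitely many true gaps of $K$ fail to be eventually adjacent. The pairs in $\mathcal{A}$ then glue $K$ into finitely many ``cyclic'' pieces. The steps are as follows.

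\begin{enumerate}
\item Show that each extremity $x$ of a gap has exactly one partner $y$ with $\{x,y\}\in\mathcal{A}$. To do this, use the finitely many discontinuity orbits and the description of Proposition \ref{prop:discorbits}. Along infinite orbits, the pattern $F\cup f^{-m}(F)$ shows that $f^{m}$ eventually glues the endpoints of $f^{-m}$-translated gaps consistently. For periodic discontinuity orbits, use the structure given by $g$, since the partner is then provided by the $g$-adjacency, which is $f^p$-periodic and hence can be averaged along the finite $f$-orbit.
\item Check that $f(\mathcal{A})=\mathcal{A}$ by construction.
\item Define the circles. Starting from a point, follow $K$ in the orientation of $\mathbb{S}^1$; whenever we hit an endpoint of a gap, jump to its $\mathcal{A}$-partner. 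This partitions $K$ into finitely many clopen cyclically ordered pieces $D_1,\ldots,D_m$. Choose a monotone embedding $h'$ sending $D_i$ onto a Cantor subset of the $i$-th circle, with $\mathcal{A}$-pairs mapped to endpoints of complementary gaps; Lemma \ref{lem:existhomeo} is used to realize these local pieces.
\item Conclude. Now $h'fh'^{-1}$ sends gaps to gaps, so it has no discontinuity points and extends to a homeomorphism of $\mathbb{S}^1_m$ by filling gaps affinely.
\end{enumerate}

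The main obstacle is step 1 for points in periodic orbits and, more generally, making sure that the partner pairing is a genuine involution. In particular we must rule out a gap endpoint acquiring two candidate partners from different iterates. For this we will compare with the $g$-adjacency, which is already a consistent involution, and take $\mathcal{A}$ to be the $g$-adjacency pairs whose $f$-images are again $g$-adjacency pairs. We then prove that this set is $f$-invariant, using that $f$ commutes with $g$ so that $f$ permutes $g$-adjacency classes up to the finite discrepancy.
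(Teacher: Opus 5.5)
There is a genuine gap. It sits exactly where you locate the main obstacle: the points whose periodic orbit contains a discontinuity point.

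\textbf{Neither candidate pairing works there.} Your eventual-adjacency set $\mathcal{A}$ is fine along infinite orbits, but it gives such periodic points no partner at all. If $z_m$ denotes the point adjacent to $f^m(x)$, then $f^{-m}(z_m)$ changes every time $f^m(x)$ is a discontinuity point of $f$, and along a periodic orbit this happens infinitely often. ``Averaging'' a pairing along a finite orbit is not a construction.

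Your final candidate (the $g$-adjacent pairs whose $f$-image is $g$-adjacent) is not $f$-invariant. Here is an example.
\begin{itemize}
\item Let $K=X\cup Y\cup Z\cup W$, with $X,Y,Z,W$ clopen intervals in this cyclic order.
\item Let $f$ map $X\to Y\to Z\to X$ increasingly with $f^3=\mathrm{Id}$ there, and let $f=\mathrm{Id}$ on $W$.
\item Take $p=3$, $g=\mathrm{Id}$, and $h$ the inclusion.
\end{itemize}
Then $\{\max X,\min Y\}$ lies in your $\mathcal{A}$. Its image $\{\max Y,\min Z\}$ does not, because $f$ sends it to $\{\max Z,\min X\}$, which is separated by $W$.

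Requiring the whole $f$-orbit of a pair to be $g$-adjacent restores invariance, but it leaves all eight block endpoints unpaired, so your step 3 is undefined. More fundamentally, in any $f$-invariant pairing the partner of $\min W$ must be fixed by $f$, hence must lie in $W$. Yet the only point adjacent to $\min W$ is $\max Z$. So the missing partners must be \emph{created}; they cannot be selected among adjacencies of $K\subset\mathbb{S}^1$ or of $h(K)\subset\mathbb{S}^1_n$.

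\textbf{The missing idea is to work in the coordinates given by $h$.} Put $\tilde f=hfh^{-1}$.
\begin{itemize}
\item Since $\tilde f^p$ has no discontinuity point, every discontinuity point of $\tilde f$ is periodic. Indeed, if $x$ were a non-periodic one, you may assume $\tilde f^{-k}(x)$ is a continuity point for all $k\geq 1$. Then $\tilde f^{-(p-1)}(x)$ is a discontinuity point of $\tilde f^{p}$ by Lemma \ref{lem:compositiondisc}; this is Lemma \ref{lem:discroot}.
\item Hence the union $E$ of the orbits of discontinuity points of $\tilde f$ is a finite $\tilde f$-invariant set. One checks that $E$ contains, with each point, the other endpoint of the gap it bounds.
\item Cutting the circles of $\mathbb{S}^1_n$ at $E$ partitions $h(K)$ into finitely many clopen intervals, with endpoints in $E$ and no point of $E$ inside.
\item Since $\tilde f$ preserves $E$ and has no discontinuity point inside these intervals, it maps each of them monotonically onto another one.
\item Re-embed each interval into its own circle, so that its two endpoints now bound the new gap. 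This gives the $f$-invariant pairing you were looking for: each point of $E$ is paired with the other endpoint of its interval. In the new coordinates $\tilde f$ has no discontinuity point.
\end{itemize}
In these coordinates there are no non-periodic discontinuities at all. So your analysis along infinite orbits, and the bound on $d(f^m)$, become unnecessary.
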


In order to prove this proposition, we need the following lemma. 

\begin{lemma} \label{lem:discroot}
Suppose $g$ belongs to $\mathrm{Homeo}_<(K')$, where $K'$ is a Cantor subset of $\mathbb{S}^1$. Suppose also that there exists $p \geq 1$ such that $g^{p}$ does not have any discontinuity point. Then all the discontinuity points of $g$ are periodic points of $g$.
\end{lemma}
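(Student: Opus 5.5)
Suppose $x$ is a discontinuity point of $g$ with infinite orbit; we aim for a contradiction. The idea is to trace how discontinuities of $g$ along the orbit of $x$ produce a discontinuity of $g^{p}$, using Lemma \ref{lem:compositiondisc}. Since $\mathrm{Disc}(g)$ is finite and the orbit $\mathcal{O}(x)$ is infinite, only finitely many points of $\mathcal{O}(x)$ are discontinuities of $g$. As in the proof of Proposition \ref{prop:discorbits}, after replacing $x$ by a point of its orbit, we may assume that $x=g^{0}(x)$ is the first point of $\mathcal{O}(x)$ lying in $\mathrm{Disc}(g)$. Thus $g^{i}(x)\notin \mathrm{Disc}(g)$ for every $i<0$, while $x \in \mathrm{Disc}(g)$.

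Next we take the index $m\ge 1$ minimal such that $g^{m}(x)$ is a continuity point of $g^{j}$ for all $j\ge 0$. Such an $m$ exists because beyond the last discontinuity of $g$ in the orbit, all points are continuity points of all positive iterates. We then consider $y=g^{-m'}(x)$ for $m'$ large and look at $g^{p}$ at the points $g^{-i}(x)$ for $0\le i<p$. By point 3 of Lemma \ref{lem:compositiondisc} applied to the factorisation $g^{p}=g^{p-i}\circ g^{i}$, the point $g^{-i}(x)$ is a continuity point of $g^{i}$. Hence $g^{-i}(x)$ is a discontinuity of $g^{p}$ if and only if $x$ is a discontinuity of $g^{p-i}$.

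Since $g^{p}$ has no discontinuities, $x$ must be a continuity point of $g^{q}$ for every $1\le q\le p$, and in particular for $q=1$. This contradicts $x\in\mathrm{Disc}(g)$. The case $q=1$ corresponds to $i=p-1$, and the argument only uses the equivalence for that value of $i$: $g^{-(p-1)}(x)$ is a continuity point of $g^{p-1}$ because no point $g^{-s}(x)$ with $s\ge 1$ is a discontinuity of $g$, and point 1 of Lemma \ref{lem:compositiondisc} propagates continuity along $g^{p-1}$ until we reach $x$. Then point 3 shows that $g^{-(p-1)}(x)$ is a discontinuity of $g^{p}=g\circ g^{p-1}$. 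This contradiction proves that every discontinuity point of $g$ has finite orbit, i.e.\ is periodic. The detour through the index $m$ is not needed.

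The main obstacle is purely bookkeeping. We must make sure that the backward orbit of the first discontinuity contains no discontinuities, so that continuity of $g^{p-1}$ at $g^{-(p-1)}(x)$ follows from point 1 of Lemma \ref{lem:compositiondisc} by induction. We must also check that the dichotomy "discontinuity/continuity" composes as in points 1–3 of Lemma \ref{lem:compositiondisc}. Both are immediate from the definitions. Note also that the choice of a first discontinuity point uses infiniteness of the orbit only to ensure that $\mathrm{Disc}(g)\cap\mathcal{O}(x)$ has a minimal index; on a periodic orbit this minimal index does not exist, which is exactly why periodic discontinuities survive.
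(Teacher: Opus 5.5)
Your proof is correct and is the paper's argument: take the first point of the infinite orbit lying in $\mathrm{Disc}(g)$, so that all its backward iterates are continuity points of $g$; then $g^{-(p-1)}(x)$ is a discontinuity of $g^{p}=g\circ g^{p-1}$, a contradiction. Your index $g^{-(p-1)}(x)$ is the right one, whereas the paper's $g^{-p-1}(x)$ is a typo. The final step is point 2 of Lemma~\ref{lem:compositiondisc} (inner map continuous, outer map discontinuous), not point 3, and the detour through $m$ and $y$ should simply be deleted.
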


\begin{proof}
Suppose for a contradiction that there exists a point $x$ with infinite orbit under $g$ which is a discontinuity point of $g$. Changing $x$ into one of its backward iterate, we can suppose that all the points $g^{-k}(x)$, with $k \geq 1$, are continuity points of $g$. But then the point $g^{-p-1}(x)$ is a discontinuity point of $g^p=g \circ g^{p-1}$, a contradiction. 
\end{proof}

\begin{proof}[Proof of Proposition \ref{prop:takingroots}]
Let $h$ be a continuous embedding $K \rightarrow \mathbb{S}^1_{\ell}$, with $\ell \geq 1$ such that the homeomorphism $hf^{p}h^{-1}$ has no discontinuity point and let $g=hfh^{-1}$. 

Let $(I_i)_{1 \leq i \leq n}$ be the partition of $h(K)$ into intervals of $h(K)$, that is, intersections of an interval of a component of $\mathbb{S}^1_\ell$ with $h(K)$, which do not contain any point in the orbit of a point of discontinuity of $g$ in its interior and with the following property. Either its extremities are in the orbit of points of discontinuity of $g$ or it is the intersection of a connected component of $\mathbb{S}^1_\ell$ with $h(K)$. This partition is finite by Lemma \ref{lem:discroot}.

By definition of this partition, $g$ has no discontinuity point in the interior of each $I_i$. Hence $g$ has to send an element of this partition to another element of this partition.

Let $h': h(K) \rightarrow \mathbb{S}^1_{n}$ be an embedding which is monotone on each interval $I_i$ and sends the $I_i$s to pairwise distinct connected components of $\mathbb{S}^1_{n}$. Then $h'gh'^{-1}$ has no discontinuity point. 

\end{proof}

\section{Elements with a bounded number of discontinuities are distorted} \label{s:distortion}

In this section, we prove the implication $3. \Rightarrow 1.$ in Theorem \ref{thm:equivalence}. The main idea is to apply the lemma of the next section which shows that any sequence of commutators of elements of $\mathrm{Homeo}_{<}(K)$ with small support and no discontinuity points is contained in a finitely generated group with an estimate on the wordlength of each element of the sequence.

To apply this lemma, in Paragraph \ref{ss:decomposition}, we show that any multihomeomorphism of the circle in $\mathrm{Homeo}_{<}(K)$ can be written as a product of a uniformly bounded number of commutators with small support. 

Then, in Paragraph \ref{ss:distortion}, we combine the results of the two preceding paragraphs to prove the implication.

\subsection{A central lemma}

Recall that, for an element $f$ of $\mathrm{Homeo}_{<}(K)$, its support $\mathrm{supp}(f)$ is the closure of the complement of the set of fixed points of $f$.

For any two elements $g,h$ of $\mathrm{Homeo}_{<}(K)$, we denote by $[g,h]=ghg^{-1}h^{-1}$ the commutator of those elements.

This lemma is now classical and is similar to lemmas we can find in \cite{CF}, \cite{Avi}, for instance.

\begin{lemma} \label{lem:distseq}
Let $K_1$ be a proper clopen subset of $K$. Let $(g_n)_{n\geq 0}$ and $(h_n)_{n\geq 0}$ be two sequences of homeomorphisms in $\mathrm{Homeo}_{<}(K)$ which are supported in $K_1$ and with no discontinuity points. Then there exists a finite subset $S$ of $\mathrm{Homeo}_{<}(K)$ such that, for any $n \geq 0$, the element $[g_n,h_n]$ belongs to the group $<S>$ generated by $S$ and
$$\ell_{S}([g_n,h_n]) \leq 14n +12.$$
\end{lemma}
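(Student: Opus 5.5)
This follows the classical scheme of Avila and Calegari--Freedman: encode the whole sequence in a few fixed elements that pack infinitely many disjoint copies of $K_1$, and extract the $n$-th commutator by conjugating with a "shift". First, since $K_1$ is a proper clopen subset, I enlarge it slightly and use Lemma \ref{lem:existhomeo} to find a sequence of pairwise disjoint clopen intervals $(U_m)_{m\in\mathbb{Z}}$ (or $m \geq 0$) in $K \setminus K_1$ accumulating on a single point $x_\infty$. I also choose an element $T \in \mathrm{Homeo}_<(K)$ without discontinuities such that $T(U_m)=U_{m+1}$ for all $m$, and an element $\phi$ without discontinuities such that $\phi(K_1)=U_0$. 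To build $T$, I take $K$ as a union of clopen intervals and make $T$ act like a translation on a "Cantor copy of $\mathbb{Z}$" of blocks between two accumulation points, fixing those points. The construction is done by hand with Lemma \ref{lem:existhomeo} on each block, and it is continuous at the accumulation points because the blocks shrink.

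Next I define two elements $G$ and $H$. On $U_m=T^m\phi(K_1)$, $G$ acts as $T^m\phi\, g_m\, \phi^{-1}T^{-m}$ and $H$ acts as $T^m\phi\, h_m\,\phi^{-1}T^{-m}$; outside $\bigcup_m U_m$ both are the identity. Since each $g_m$ has no discontinuity and is supported in $K_1$, each block map is a locally monotone homeomorphism of $U_m$ equal to the identity near its ends. Because the $U_m$ shrink to $x_\infty$, $G$ and $H$ are homeomorphisms of $K$, and they are locally monotone everywhere: near $x_\infty$ we only need continuity, and local monotonicity only fails if infinitely many blocks are near $x_\infty$.

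\textbf{This is the main obstacle.} The definition of $\mathrm{Homeo}_<(K)$ requires an interval around $x_\infty$ on which the map extends to a homeomorphism. An infinite product of non-monotone block maps is not monotone near $x_\infty$. I would fix this with the standard Avila trick: use pairs of elements $G_0,G_1$ and an extra "selector" element instead of putting all $g_m$ in one element. Concretely, I would write
$$[g_n,h_n] = \phi^{-1} T^{-n}\bigl[G',H'\bigr]T^{n}\phi,$$
where $G'=G$ is restricted to $U_n$ by a commutator trick. Let $P$ be an element supported on $U_0$ that is a "transposition" swapping two halves. Then $[T^{-n}GT^{n}, P]$ isolates block $0$, with a bounded number of extra letters, giving a word length of the form $an+b$. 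The counting that yields exactly $14n+12$ comes from writing the commutator of the isolated pieces, $4(2n+\text{const})$ plus conjugations, so the shifts $T^{\pm n}$ appear a fixed number of times.

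If the monotonicity issue is real (elements of $\mathrm{Homeo}_<(K)$ must be locally monotone at $x_\infty$), I would instead take the $U_m$ accumulating on a point from one side only. Then I encode each $g_m$ through its restriction to finitely many pieces, using that a discontinuity-free element supported in $K_1$ is monotone on each of finitely many subintervals of $K_1$. With a fixed number of such pieces, I replace $G$ by a bounded number of elements each of which is monotone near $x_\infty$. This is the step I expect to need the most care.
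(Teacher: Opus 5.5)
Your architecture (a shift $T$ with pairwise disjoint blocks $U_m$ shrinking to $x_\infty$, infinite products $G,H$ encoding the two sequences, and a commutator with a swap $P$ to cut out block $n$) is the paper's. As written, however, the proposal has two genuine gaps.

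\emph{Membership of $G$ and $H$.} You call local monotonicity at $x_\infty$ the main obstacle and never resolve it. The ``selector'' and ``pieces'' remedies are vague, and your concrete formula still uses $G$. The obstacle is illusory, and removing it is exactly where the hypothesis that $g_m,h_m$ have no discontinuity points is used. As observed after the definition of discontinuity points, such an element is the restriction of a homeomorphism of $\mathbb{S}^1$. Since $g_m$ fixes the infinite set $K\setminus K_1$ pointwise, this homeomorphism is orientation preserving (an orientation-reversing circle homeomorphism has exactly two fixed points). It can also be taken to be the identity off the convex hulls of the interval components of $K_1$. Hence each block map $T^m\phi g_m\phi^{-1}T^{-m}$ is the restriction of an orientation-preserving homeomorphism supported in the convex hull of $U_m$. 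Gluing these with the identity gives an orientation-preserving homeomorphism of $\mathbb{S}^1$, continuous at $x_\infty$ because the hulls shrink to $x_\infty$, whose restriction is $G$. So $G,H\in\mathrm{Homeo}_<(K)$ and have no discontinuity point; this is the content of the paper's remark that $s_3,s_4,s_5$ are continuous at $p$. No splitting into pieces is needed.

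\emph{Extraction of the commutator.} For the swap to help, $P$ must be supported in $U_0$ with $P\phi(K_1)\cap\phi(K_1)=\emptyset$. So you need $\phi(K_1)\subsetneq U_0$ rather than $\phi(K_1)=U_0$; this is the role of $K'_1\supsetneq K_1$ and $s_2$ in the paper. With $\hat g=\phi g_n\phi^{-1}$, the element $A_n=[T^{-n}GT^n,P]$ is not ``$G$ restricted to a block''. It equals $\hat g$ on $\phi(K_1)$ and $P\hat g^{-1}P^{-1}$ on $P\phi(K_1)$. Consequently the commutator of two such pieces, $[A_n,B_n]$ with $B_n=[T^{-n}HT^n,P]$, equals $P\phi[g_n^{-1},h_n^{-1}]\phi^{-1}P^{-1}$ on $P\phi(K_1)$, which is not the identity in general. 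So your formula for $[g_n,h_n]$ fails. Its length would be about $16n$ anyway, and the bound $14n+12$ is asserted, not computed.

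The paper kills this ghost copy with a third encoding element $s_5=\prod_i s_1^ig_i^{-1}h_i^{-1}s_1^{-i}$. With $A_n,B_n,C_n$ the swap commutators built from $s_3,s_4,s_5$, the product $A_nB_nC_n$ is $s_1^n[g_n,h_n]s_1^{-n}$ on the first copy and a conjugate of $g_n^{-1}h_n^{-1}h_ng_n=\mathrm{Id}$ on the ghost. After conjugating by $s_1^n$ this gives length $3(4n+4)+2n=14n+12$. In your notation an even cheaper fix works: $T^{-n}HT^n$ is trivial on $P\phi(K_1)$, so $[A_n,T^{-n}HT^n]=\phi[g_n,h_n]\phi^{-1}$. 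This has length at most $2(4n+4)+2(2n+1)=12n+10$, giving $\ell([g_n,h_n])\le 12n+12$.
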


\begin{proof}
The generating set $S$ consists in $5$ homeomorphisms $s_1$, $s_2$, $s_3$, $s_4$, $s_5$ that we construct now. We encourage the reader to look at Figure \ref{fig:trick} while reading the proof.

\begin{figure}
    \centering
    \includegraphics[scale=0.9]{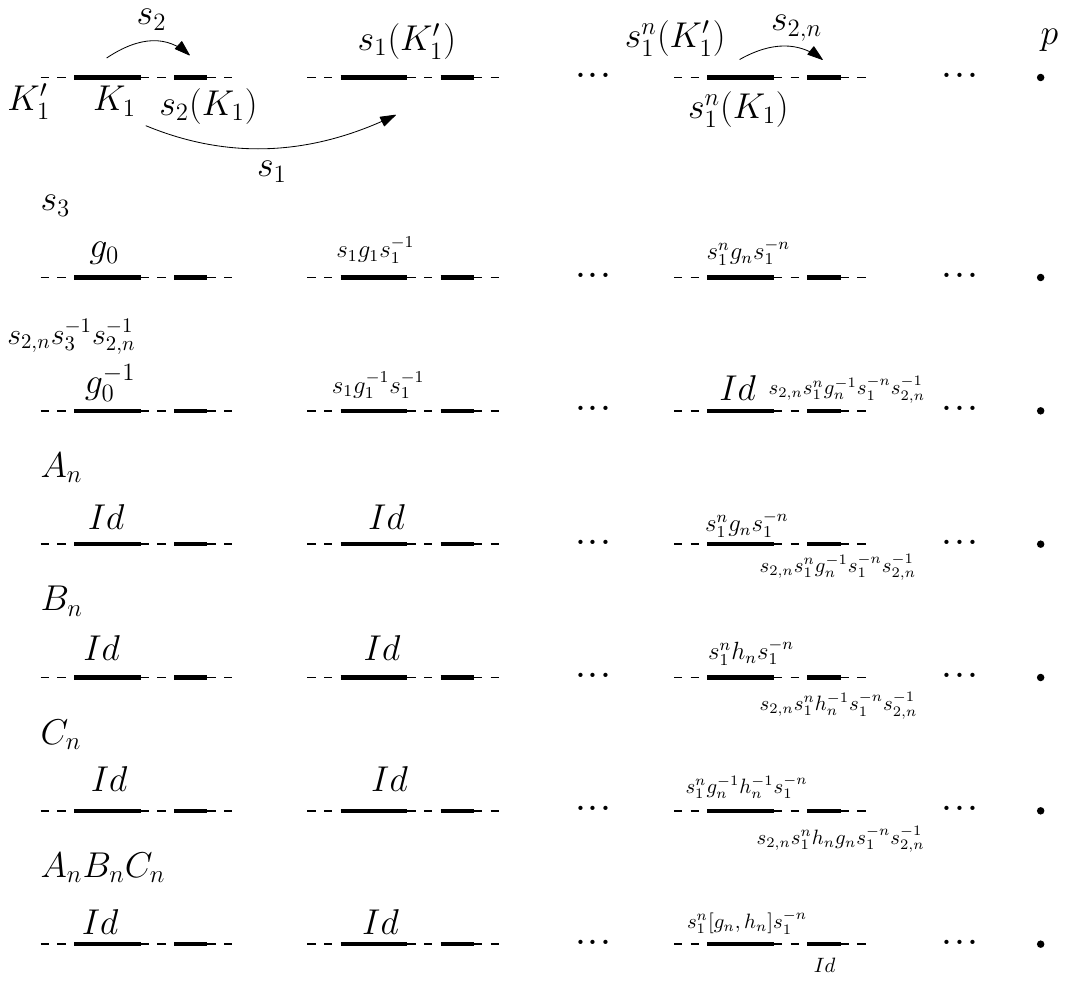}
    \caption{The elements $s_1$, $s_2$, $s_3$, $A_n$, $B_n$ and $C_n$}
    \label{fig:trick}
\end{figure}

Fix a proper clopen subset $K'_1$ of $K$ which contains $K_1$. Take an element $s_1$ of $\mathrm{Homeo}_{<}(K)$ no discontinuity point such that the subsets $s_1^{n}(K'_1)$ are pairwise disjoint and converge to a point $p$. For $s_2$, take any homeomorphism in $\mathrm{Homeo}_{<}(K)$ with no discontinuity point, supported in $K'_1$, which sends $K_1$ to a subset disjoint from $K_1$. Observe that, for $i \geq 0$ the element $s_{2,i}=s_1^i s_2 s_1^{-i}$ is supported in $s_1^i(K'_1)$ and that 
$$s_{2,i}(s_1^i(K_1)) \cap s_1^i(K_1)= \emptyset.$$
Finally, let us define $s_3,s_4,$ and $s_5$ by
$$s_3= \prod_{i=0}^{\infty}s_1^i g_i s_1^{-i}$$
$$s_4= \prod_{i=0}^{\infty}s_1^i h_i s_1^{-i}$$
$$s_5= \prod_{i=0}^{\infty}s_1^i g_i^{-1}h_i^{-1} s_1^{-i}.$$
Observe that those elements are continuous in $p$ and don't have any discontinuity point.

Fix $n\geq 0$ and let $A_n=s_3 s_{2,n}s_3^{-1}s_{2,n}^{-1}$. The element $A_n$ is supported in $s_{1}^{n}(K_1)\cup s_{2,n}(s_{1}^{n}(K_1))$, as $s_{2,n}$ is supported in $s_1^{n}(K'_1)$. It is equal to $s_1^{n}g_n s_1^{-n}$ on $s_{1}^{n}(K_1)$ and to $s_{2,n}s_1^{n}g_n^{-1} s_1^{-n}s_{2,n}^{-1}$ on $s_{2,n}s_{1}^{n}(K_1)$. Observe also that $\ell_{S}(A_n) \leq 4n+4$.

A similar description can be made for $B_n=s_4 s_{2,n}s_4^{-1}s_{2,n}^{-1}$, which is equal to $s_1^{n}h_n s_1^{-n}$ on $s_{1}^{n}(K_1)$ and to $s_{2,n}s_1^{n}h^{-1}_n s_1^{-n}s_{2,n}^{-1}$ on $s_{2,n}s_{1}^{n}(K_1)$ and for $C_n=s_5 s_{2,n}s_5^{-1}s_{2,n}^{-1}$, which is equal to $s_1^{n}g_n^{-1} h_{n}^{-1} s_1^{-n}$ on $s_{1}^{n}(K_1)$ and to $s_{2,n}s_1^{n}h_n g_n s_1^{-n}s_{2,n}^{-1}$ on $s_{2,n}s_{1}^{n}(K_1)$. Observe also that $\ell_{S}(B_n) \leq 4n+4$ and $\ell_{S}(C_n) \leq 4n+4$.

Now the homeomorphism $A_nB_nC_n$ is supported in $s_{1}^{n}(K_1)\cup s_{2,n}(s_{1}^{n}(K_1))$ and is equal to $s_1^{n}g_nh_n g_n^{-1}h_{n}^{-1} s_1^{-n}=s_1^{n}[g_n,h_n] s_1^{-n}$ on $s_{1}^{n}(K_1)$ and to $s_{2,n}s_1^{n}g_n^{-1}h_{n}^{-1}h_ng_n s_1^{-n}s_{2,n}^{-1}=Id$ on $s_{2,n}s_{1}^{n}(K_1)$. Hence
$$A_nB_nC_n=s_1^{n}[g_n,h_n] s_1^{-n}$$
Hence the element $[g_n,h_n]$ belongs to the group generated by $S$ and $\ell_S([g_n,h_n])\leq 14n+12$.

\end{proof}

\subsection{Decomposition of an element as a product of commutators with small support} \label{ss:decomposition}

Let $I_1$ and $I_2$ be two proper clopen intervals of $K$ which cover $K$ such that $I_1 \cap I_2$ is the union of two nonempty disjoint clopen intervals $J_1$ and $J_2$ of $K$ but is not an interval of $K$. For convenience, the indices of the intervals $I_i$ will be taken in $\mathbb{Z}/2\mathbb{Z}$.

\begin{figure}
    \centering
    \includegraphics[scale=0.9]{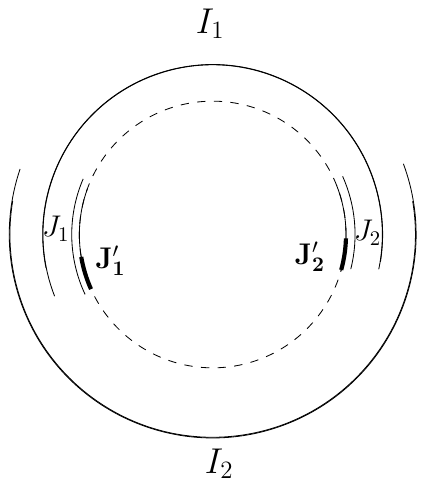}
    \caption{Notation for Lemma \ref{lem:fragmentation}}
    \label{fig:intervals}
\end{figure}

\begin{lemma} \label{lem:fragmentation}
For any homeomorphism $f$ in $\mathrm{Homeo}_<(K)$ which is the restriction to $K$ of an orientation-preserving homeomorphism of the circle, there exist homeomorphisms $f_1$, $f_2$, and $f_3$ in $\mathrm{Homeo}_<(K)$ with no discontinuity point and $i=1$ or $2$ such that the following holds.
\begin{enumerate}
\item the homeomorphisms $f_1$ and $f_3$ are supported in $I_i$ and the homeomorphism $f_2$ is supported in $I_{i+1}$.
\item $ f=f_1f_2f_3.$
\end{enumerate}
\end{lemma}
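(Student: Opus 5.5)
The plan is to construct $f_3$ first, then $f_1$, and to define $f_2$ so that the identity $f=f_1f_2f_3$ holds automatically. Write $f$ as the restriction to $K$ of an orientation-preserving homeomorphism $F$ of $\mathbb{S}^1$. Set $A_1=I_1\setminus(J_1\cup J_2)$ and $A_2=I_2\setminus(J_1\cup J_2)$. These are disjoint clopen intervals of $K$, and $K$ is the cyclic concatenation $J_1, A_1, J_2, A_2$. Moreover $I_i=J_1\cup A_i\cup J_2$ and $K\setminus I_i=A_{i+1}$. Hence an element of $\mathrm{Homeo}_<(K)$ is supported in $I_{i+1}$ as soon as it is the identity on $A_i$. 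We then define
$$f_2=f_1^{-1}ff_3^{-1}.$$
It has no discontinuity point by point 4 of Lemma \ref{lem:compositiondisc} once $f_1,f_3$ have none, and $f=f_1f_2f_3$ holds by construction. So everything reduces to choosing $i$, $f_1$ and $f_3$ supported in $I_i$, without discontinuities, such that $f_1^{-1}ff_3^{-1}$ is the identity on $A_i$.

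\textbf{Choice of $i$ and of a small interval $B$.} Pick a point $x$ in the interior of $J_1$ (not an endpoint), and a small clopen interval $B\subset J_1$ around $x$ whose endpoints are separated from the endpoints of $J_1$ by points of $K$. The point $f(x)$ lies in $I_1$ or in $I_2$; choose $i$ so that $f(x)$ lies in $I_i$ and is not an endpoint of $I_i$ (if $f(x)$ is such an endpoint, it lies in the other $I$'s interior, since the endpoints of $I_1$ are interior points of $J_1\cup J_2\subset I_2$). By continuity of $F$, shrinking $B$ we get that $f(B)$ is a clopen interval contained in $I_i$, at positive distance from both endpoints of $I_i$, and that $B$ is likewise strictly inside $I_i$.

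\textbf{Construction of $f_3$ and $f_1$.} Using Lemma \ref{lem:existhomeo}, take $f_3\in\mathrm{Homeo}_<(K)$ with no discontinuity point, equal to the identity outside $I_i$, such that $f_3(B)\supset A_i$; equivalently $f_3^{-1}(A_i)\subset B$.
\begin{itemize}
\item Concretely, cut $I_i$ into three consecutive clopen intervals $L_1$, $M_1$, $R_1$, with $B\subset M_1$, small margins kept near the endpoints of $I_i$, and $M_1$ chosen so that $B$ contains an interval $B'$ with the same combinatorics.
\item Apply Lemma \ref{lem:existhomeo} to the families $(L_1,B',R_1')$ and $(L_1',A_i,R_1'')$, together with the complement of $I_i$ sent to itself by the identity.
\end{itemize}
Then $\varphi:=ff_3^{-1}$ maps $A_i$ increasingly and continuously onto the clopen interval $\varphi(A_i)\subset f(B)$, which lies strictly inside $I_i$. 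Next we build $f_1$ supported in $I_i$, without discontinuity, with $f_1=\varphi$ on $A_i$:
\begin{itemize}
\item Let $P$ and $Q$ be the two clopen pieces of $I_i\setminus A_i$ (one on each side), and $P'$, $Q'$ the two pieces of $I_i\setminus\varphi(A_i)$, each shrunk by a margin at the endpoint of $I_i$.
\item Lemma \ref{lem:existhomeo} gives a homeomorphism $\psi$ without discontinuity that is the identity near $\partial I_i$ and outside $I_i$, and maps $P\to P'$, $A_i\to\varphi(A_i)$, $Q\to Q'$.
\item Then $\psi^{-1}\varphi$ is an increasing homeomorphism of $A_i$ with no discontinuity. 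Extend it by the identity; it has no discontinuity because $A_i$ is an interval of $K$ whose endpoints border gaps of $\mathbb{S}^1\setminus K$ or points fixed on both sides.
\item Set $f_1=\psi\circ(\psi^{-1}\varphi\text{ extended})$.
\end{itemize}
With these choices, $f_1^{-1}ff_3^{-1}=\mathrm{Id}$ on $A_i$, so $f_2$ is supported in $I_{i+1}$.

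\textbf{Main obstacle.} The delicate step is the careful use of Lemma \ref{lem:existhomeo} to produce homeomorphisms that are supported in $I_i$, that are genuinely free of discontinuities (in particular at the endpoints of $I_i$ and of $A_i$), and that prescribe the map exactly on $A_i$. This requires choosing the margins so that the "union of two intervals is not an interval" hypothesis holds and the cyclic orders match. The choice of $i$ through a point of $J_1$ is exactly what guarantees that the target $\varphi(A_i)$ lies inside $I_i$ with room on both sides.
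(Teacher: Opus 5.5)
Your proof is correct, but it is organized differently from the paper's.

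\textbf{The paper's route.} It chooses $i$ by a covering dichotomy: either $f(I_2\setminus I_1)$ does not contain $I_1$, or $f(I_1\setminus I_2)$ does not contain $I_2$. It then works only on the target side:
\begin{itemize}
\item it builds $g_1=f_1^{-1}$, supported in $I_1$, which pushes $f\bigl((I_2\setminus I_1)\cup J'_1\cup J'_2\bigr)\cap I_1$ into $J_1\cup J_2$;
\item it builds $f_2=f_{2,1}f_{2,2}$, supported in $I_2$ and agreeing with $g_1f$ on $I_2\setminus I_1$, by first matching on the collars $J''_1\cup J''_2$ and then correcting on $I_2\setminus I_1$;
\item the remainder $f_3=f_2^{-1}g_1f$ is the identity on $K\setminus I_1$.
\end{itemize}

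\textbf{Your route.} You work first on the source side. Your $f_3$ compresses $A_i$ into a tiny interval $B\subset J_1$ around a point $x$ whose image is not an endpoint of $I_i$. Then $ff_3^{-1}(A_i)$ is automatically a small interval well inside $I_i$. One application of Lemma \ref{lem:existhomeo}, plus a correction on $A_i$, gives $f_1$ agreeing with $ff_3^{-1}$ on $A_i$. The remainder is the middle factor $f_2=f_1^{-1}ff_3^{-1}$, which is the identity on $A_i=K\setminus I_{i+1}$.

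\textbf{What each buys.} With your approach, $i$ is determined by where a single point goes. You never need to control how $f(I_{i+1}\setminus I_i)$ sits inside $I_i$, and you avoid the two-step matching $f_{2,1},f_{2,2}$. The paper's approach never moves the domain and is closer to the classical fragmentation argument for circle homeomorphisms. Both use orientation preservation at the same point: an increasing map on an interval of $K$ must be realized by an element supported in $I_i$.

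\textbf{Minor cleanups.}
\begin{itemize}
\item For the choice of $i$, the right justification is that the four endpoints of $I_1$ and $I_2$ are pairwise distinct. The endpoints of $I_1$ lie in $I_2$ but are not endpoints of $I_2$. Calling them \emph{interior points of $J_1\cup J_2$} is not the right statement.
\item The labels $L_1,M_1,R_1,R'_1,R''_1$ in the construction of $f_3$ should be simplified. What you need is that the two pieces of $I_i\setminus B'$ are sent increasingly onto the two pieces of $I_i\setminus A_i$, that $B'$ goes onto $A_i$, and that $f_3$ is the identity on $K\setminus I_i$.
\end{itemize}
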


\begin{proof}
Fix $f$ as in the statement of the lemma. Suppose that $f(I_2 \setminus I_1)$ does not contain $I_1$. Otherwise $f(I_1 \setminus I_2)$ does not contain $I_2$ and exchange the roles of $I_1$ and $I_2$ in the proof below.

The subset $f(I_2 \setminus I_1) \cap I_1$ is contained in the union of at most two clopen intervals of $K$ which are contained in $I_1$ and do not cover $I_1$. Take respective small nonempty clopen subintervals $J'_1$ and $J'_2$ of $J_1$ and $J_2$ such that $(I_2 \setminus I_1) \cup J'_1\cup J'_2$ is an interval of $K$ and $f((I_2 \setminus I_1) \cup J'_1 \cup J'_2)$ does not cover $I_1$.  By Lemma \ref{lem:existhomeo}, there exists an element $g_1$ of $\mathrm{Homeo}_<(K)$ with no discontinuity point which is supported in $I_1$ such that $g_1(f((I_2\setminus I_1) \cup J'_1 \cup J'_2) \cap I_1) \subset J_1 \cup J_2 $ (see Figure \ref{fig:f_1}).

\begin{figure}
    \centering
    \includegraphics[scale=0.9]{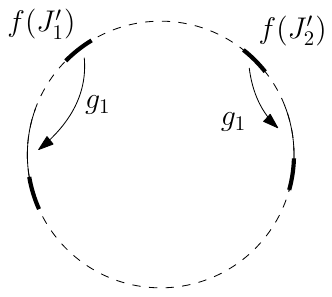}
    \caption{The homeomorphism $g_1$}
    \label{fig:f_1}
\end{figure}

Take respective proper nonempty clopen subintervals $J''_1$ and $J''_2$ of $J'_1$ and $J'_2$ such that $(I_2 \setminus I_1) \cup J''_1\cup J''_2$ is an interval of $K$. Now, take a homeomorphism $f_{2,1}$ in $\mathrm{Homeo}_<(K)$ which is supported in $I_2$, but whose support does not touch the extremities of the interval $I_2$ of $K$, and which is equal to $g_1 f$ on $J''_1 \cup J''_2$ and has no discontinuity point.

Such a homeomorphism exists by Lemma \ref{lem:existhomeo}, as $g_1 f$ is the restriction to $K$ of an orientation-preserving homeomorphism of the circle. As $f_{2,1}^{-1}g_1 f$ has no discontinuity point and is equal to the identity on $J''_1 \cup J''_2$, it preserves the clopen subset $I_2 \setminus I_1$. Let then $f_{2,2}$ be the homeomorphism in $\mathrm{Homeo}_<(K)$ with no discontinuity point which is supported on $I_2 \setminus I_1$ and equal to $f_{2,1}^{-1}g_1 f$ on $I_2 \setminus I_1$. Let $f_2=f_{2,1}f_{2,2}$.

Then it is easy to check that the homeomorphism $f_3=f_2^{-1}g_1f$ is supported in $I_1$. It suffices to take $g_1=f_1^{-1}$ to conclude the proof of the lemma.
\end{proof}

\begin{lemma}
Let $I$ be a proper clopen interval of $K$. For any homeomorphism $f$ of $\mathrm{Homeo}_<(K)$ which is supported in $I$ with no discontinuity point, there exist elements $g$ and $h$ in $\mathrm{Homeo}_<(K)$ with no discontinuity points such that
$$f=[g,h].$$
\end{lemma}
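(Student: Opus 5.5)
We plan an Anderson-type infinite product trick, in the spirit of Lemma \ref{lem:distseq}. Since $I$ is a proper clopen interval of $K$, its complement contains a nonempty clopen interval, and we can build an element $g$ of $\mathrm{Homeo}_<(K)$ with no discontinuity point such that the sets $g^n(I)$, $n \geq 0$, are pairwise disjoint and converge to a single point $p$ of $K$. To do this, choose a sequence of pairwise disjoint clopen intervals $I=L_0, L_1, L_2, \ldots$ arranged monotonically along the circle and accumulating on $p$, together with intervals $L_{-1}, L_{-2},\ldots$ accumulating on a point $p'$ on the other side. Then use Lemma \ref{lem:existhomeo} repeatedly to send each $L_k$ onto $L_{k+1}$ and each gap between consecutive intervals onto the next gap, all increasingly. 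If we arrange that $K$ is the union of the $L_k$, the gaps and the two points $p,p'$, then glueing these pieces gives an orientation-preserving homeomorphism $g$ of $K$ fixing $p$ and $p'$ with no discontinuity point. It has the usual north--south dynamics.

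Next we define $h$ as the infinite product
$$h=\prod_{k=0}^{\infty} g^{k} f g^{-k},$$
that is, $h=g^kfg^{-k}$ on $g^k(I)$ for $k\geq 0$ and $h=\mathrm{Id}$ elsewhere. The factors have pairwise disjoint supports, so $h$ is a well-defined bijection. Then
$$g h g^{-1}=\prod_{k=1}^{\infty} g^{k} f g^{-k},$$
so $h\,(ghg^{-1})^{-1}=f$. Hence $f=h g h^{-1} g^{-1}=[h,g]$, and the pair $(h,g)$ gives the required commutator after renaming; if one wants $[g,h]$ in that order, one uses $[g,h]^{-1}=[h,g]$ applied to $f^{-1}$.

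It remains to check that $h$ belongs to $\mathrm{Homeo}_<(K)$ and has no discontinuity point. This is the main obstacle. Local monotonicity and the absence of discontinuities hold on each $g^k(I)$, since $f$ has no discontinuity point and is supported in $I$, so it fixes a neighbourhood of the endpoints of $I$ in $K$ or at least maps the endpoints of $I$ to themselves. Likewise these properties hold on the complement of $\bigcup_k g^k(I)\cup\{p\}$, where $h$ is the identity. The delicate point is continuity at $p$ and monotonicity near $p$. For continuity, note that $h$ preserves each $g^k(I)$ and that these sets shrink to $p$, so $h(x)\to p$ as $x\to p$; the same holds for $h^{-1}$. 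For local monotonicity at $p$, we must also ensure that $h$ preserves orientation on each $g^k(I)$. Here $f$ may reverse orientation on sub-pieces of $I$, but as an element with no discontinuity point supported in the interval $I$, $f$ extends to a homeomorphism of the interval spanned by $I$ fixing its endpoints, hence is increasing globally on $I$. Therefore $h$ is increasing on a neighbourhood of $p$, since it is increasing on each $g^k(I)$ and is the identity in between, and these pieces preserve their order. Consequently $h$ is locally monotone at $p$, and $p$ is not a discontinuity point because it is not an endpoint of a gap that $h$ moves.
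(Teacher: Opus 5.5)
Your proposal is correct and is essentially the paper's argument: the same infinite product $\prod_{k\geq 0} g^k f g^{-k}$ along a shift with no discontinuity point that pushes $I$ to a point, and the same telescoping identity, with the names $g$ and $h$ swapped relative to the paper (so your $[h,g]$ is the paper's $[g,h]$). Your extra check that $f$ fixes the endpoints of $I$ and is increasing on $I$, which makes the product locally monotone at the limit point $p$, spells out a detail the paper covers in a single sentence.
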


\begin{proof}
Take an element $h$ in $\mathrm{Homeo}_<(K)$, with no discontinuity point, such that the clopen subsets $h^n(I)$, for $n\geq 0$, are pairwise disjoint and converge to a point $p$ of $K$ for the Hausdorff topology. Observe that, for $n\geq 0$, the homeomorphisms $h^nfh^{-n}$ is supported in $h^n(I)$ so that those homeomorphisms have pairwise disjoint supports and pairwise commute. Hence we can define 
$$g=\prod_{n=0}^{+\infty}h^nfh^{-n}.$$
It is the map which is equal to $h^nfh^{-n}$ on the interval $h^n(I)$ and which is equal to the identity elsewhere. Because of the convergence of the sequence of subsets $(h^n(I))_{n \geq 0}$ to a point this map contains no discontinuity point and defines hence a homeomorphism in $\mathrm{Homeo}_<(K)$.
Hence we have
$$\begin{array}{rcl}
[g,h] & = & \displaystyle \prod_{n=0}^{+\infty}h^nfh^{-n}\prod_{n=0}^{+\infty}h^{n+1}f^{-1}h^{-n-1} \\
 & = & \displaystyle \prod_{n=0}^{+\infty}h^nfh^{-n}\prod_{n=1}^{+\infty}h^{n}f^{-1}h^{-n} \\
 & = & f.
 \end{array}$$
\end{proof}

As a direct consequence of the two above lemmas, we obtain the following corollary which will be useful to apply Lemma \ref{lem:distseq}.

\begin{corollary} \label{cor:decompositioncommutators}
For any homeomorphism $f$ in $\mathrm{Homeo}_<(K)$ which is the restriction to $K$ of an orientation preserving homeomorphism of the circle, there exist homeomorphisms $g_1$, $h_1$, $g_2$, $h_2$, $g_3$ and $h_3$ in $\mathrm{Homeo}_<(K)$ with no discontinuity points and $i=1$ or $2$ with the following properties.
\begin{enumerate}
\item the homeomorphisms $g_1$, $h_1$, $g_3$ and $h_3$ are supported in $I_i$ while the homeomorphisms $g_2$ and $h_2$ are supported in $I_{i+1}$.
\item $ f=[g_1,h_1][g_2,h_2][g_3,h_3].$
\end{enumerate}
\end{corollary}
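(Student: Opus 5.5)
The corollary follows by chaining Lemma \ref{lem:fragmentation} with the commutator lemma that precedes the corollary.

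First, given $f$ the restriction to $K$ of an orientation-preserving circle homeomorphism, apply Lemma \ref{lem:fragmentation}. This gives an index $i \in \{1,2\}$ and elements $f_1,f_2,f_3$ of $\mathrm{Homeo}_<(K)$ without discontinuity points such that:
\begin{itemize}
\item $f_1$ and $f_3$ are supported in $I_i$;
\item $f_2$ is supported in $I_{i+1}$;
\item $f=f_1f_2f_3$.
\end{itemize}

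Second, since $I_1$ and $I_2$ are proper clopen intervals of $K$, each $f_j$ satisfies the hypotheses of the preceding lemma. That lemma yields $g_j,h_j$ without discontinuity points such that $f_j=[g_j,h_j]$. Substituting, we get $f=[g_1,h_1][g_2,h_2][g_3,h_3]$.

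The only point needing care is the support condition in item 1. The lemma as stated does not localize the supports of $g_j$ and $h_j$: in its proof, $h$ pushes $I$ off itself, so $h$ is not supported in $I$. I would therefore rerun that construction inside the interval. Take the $I_i$ with $f_1$ and $f_3$ supported in a proper clopen subinterval $I'\subsetneq I_i$. One can arrange this, since Lemma \ref{lem:fragmentation}'s construction keeps supports away from the endpoints, as was noted for $f_{2,1}$.

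Then choose $h$, without discontinuity points and supported in $I_i$, such that the sets $h^n(I')$ for $n\ge 0$ are pairwise disjoint inside $I_i$ and converge to a point of $I_i$. Such an $h$ exists by Lemma \ref{lem:existhomeo} applied to $I_i$ viewed as a Cantor set in an interval, with a contracting "shift" toward an interior point.

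Next define $g=\prod_{n\ge0}h^nf_jh^{-n}$. It is supported in $I_i$ and has no discontinuity points because the blocks shrink to a point. The telescoping computation from the preceding lemma then gives $[g,h]=f_j$.

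The same construction works for $f_2$ in $I_{i+1}$. All six elements then have the required supports, which proves the corollary. The main obstacle is constructing this localized shift $h$ and checking that the supports of the $f_j$ sit strictly inside $I_i$ and $I_{i+1}$. If that is delicate, I would instead shrink the target intervals slightly, which costs nothing for the later application of Lemma \ref{lem:distseq}, since that lemma only needs supports in a fixed proper clopen set.
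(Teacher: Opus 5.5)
Your proof is correct and follows the paper's route: the paper chains Lemma \ref{lem:fragmentation} with the commutator lemma and calls the corollary a direct consequence. Your extra step is exactly what item 1 requires, because the $h$ in the commutator lemma as stated moves $I$ off itself and so is not supported in $I$. That step reruns the commutator construction inside $I_i$, with $f_j$ supported in a clopen subinterval avoiding the endpoints of $I_i$; this holds automatically for $f_2$ and $f_3$ in the fragmentation construction, and for $f_1=g_1^{-1}$ once $g_1$ is chosen to be the identity near the endpoints of $I_1$.
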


\subsection{End of the proof} \label{ss:distortion}

We prove here the implication $3. \Rightarrow 1.$ in Theorem \ref{thm:equivalence}. Let $f$ be a multihomeomorphism of the circle. By definition, there exists $n \geq 1$ and a locally monotone embedding $h : K \rightarrow \mathbb{S}^1_\ell$ such that $\hat{f}=hfh^{-1}$ has no discontinuity point. For $i=1,\ldots, \ell$, let $K_i$ be the intersection of $h(K)$ with the $i$th circle of $\mathbb{S}^1_\ell$. 

Observe that $f$ is distorted in $\mathrm{Homeo}_{<}(K)$ if and only if some power of $f$ is distorted in $\mathrm{Homeo}_{<}(K)$. Hence, up to replacing $f$ with $f^\ell$, we can suppose that, for any index $i$, $\hat{f}(K_i)=K_i$ and that, for any $i$, $f_i=\hat{f}_{|K_i}$ is the restriction to $K_i$ of an orientation-preserving homeomorphism of the circle. For each Cantor set $K_i$, take two clopen intervals $I_{i,1}$ and $I_{i,2}$ of $K_i$ whose intersection is the union of two clopen intervals. By Corollary \ref{cor:decompositioncommutators}, for any $n\geq 1$, there exist increasing homeomorphisms $g_{i,1,n}$, $h_{i,1,n}$, $g_{i,3,n}$ and $h_{i,3,n}$ in $\mathrm{Homeo}_<(I_{i,1})$ and increasing homeomorphisms $g_{i,2,n}$, $h_{i,2,n}$, $g_{i,4,n}$ and $h_{i,4,n}$ in $\mathrm{Homeo}_<(I_{i,2})$ such that
$$ f_i^{n^2}=[g_{i,1,n},h_{i,1,n}][g_{i,2,n},h_{i,2,n}][g_{i,3,n},h_{i,3,n}][g_{i,4,n},h_{i,4,n}].$$
Moreover, we can take $g_{i,1,n}=h_{i,1,n}=Id$ or $g_{i,4,n}=h_{i,4,n}=Id$ but we will not use this fact. 

By Lemma \ref{lem:distseq}, for any indices $1 \leq i \leq \ell$ and $1 \leq j \leq 4$, there a subset $S_{i,j}$ of elements of $\mathrm{Homeo}_<(h(K))$ supported in $K_i$ such that
\begin{enumerate}
\item for any $n \geq 1$, the commutator $[g_{i,j,n},h_{i,j,n}]$ belongs to the group generated by $S_{i,j}$.
\item $\ell_{S_{i,j}}([g_{i,j,n},h_{i,j,n}]) \leq 14n+12$.
\end{enumerate}.

Let $$\hat{S}=\bigcup_{i,j} S_{i,j}.$$ Then 
$$\ell_{\hat{S}}(\hat{f}^{n^2}) \leq 56n+48.$$

Taking
$$S=\left\{ h^{-1}\hat{s}h | \ \hat{s} \in \hat{S} \right\},$$
we obtain that
$$\ell_{S}(f^{n^2}) \leq 56n+48.$$
so that
$$\lim_{n \rightarrow +\infty} \frac{\ell_{S}(f^{n^2})}{n^2}=0$$
and $f$ is distorted in $\mathrm{Homeo}_<(K)$.

\small
\bibliographystyle{amsalpha}
\bibliography{Biblio}

\end{document}